\theoremstyle{plain}
\newtheorem{theorem}{Theorem}[section]
\newtheorem{lemma}[theorem]{Lemma}
\newtheorem{definition}[theorem]{Definition}
\newtheorem{corollary}[theorem]{Corollary}
\newtheorem{proposition}[theorem]{Proposition}
\newtheorem{remark}[theorem]{Remark}
\newtheorem{example}[theorem]{Example}
\begin{document}

\title{ On Izumi's theorem on comparison of valuations}

\author{ Mohammad Moghaddam}
\address{Mohammad Moghaddam\\
         School of Mathematics, Institute for Research in Fundamental Sciences (IPM)\\
         P. O. Box 19395-5746, Tehran, Iran}
\email{moghaddam@ipm.ir}
\thanks{This work was done while the author was a Postdoctoral Research Associate at the School of Mathematics,
        Institute for Research in Fundamental Sciences (IPM)}

\subjclass[2000]{13A18}
\keywords{Valuations, Sequence of key polynomials, Izumi constant}
\date{\today}
\begin{abstract}
  We prove that the sequence of MacLane key polynomials  constructed in \cite{Mac1} and \cite{Sp2} for a valuation extension $(K,\nu)\subset (K(x),\mu)$ is finite, provided that both $\nu$ and $\mu$ are divisorial and $\mu$ is centered over an analytically irreducible local domain $(R,\frak{m})\subset K[x]$. As a corollary, we prove  Izumi's theorem on comparison of divisorial valuations. 
  We give explicit bounds for the Izumi constant in terms of the key polynomials of the valuations. We show that this bound can be attained in some cases.
\end{abstract}

\maketitle

\bibliographystyle{amsplain}

\section{Introduction}\label{intro}{

We give a  proof of the finiteness of the sequence of  MacLane key polynomials  of the extensions of the valuations, in the case of divisorial valuations centered over an analytically irreducible domain (Theorem \ref{finiteness discretness}). 
 As a result, we are able to prove Izumi's theorem:

\begin{theorem}\label{final izumi}
  Suppose $\mu$ and $\mu'$ are two divisorial $k-$valuations of a field $K/k$   such that $K$ is the quotient field of   an analytically irreducible local domain $(R,\frak{m})\subset K$. Furthermore, suppose  that $\mu$ and $\mu'$ are centered over $(R,\frak{m})$  (with common center $\frak{m}$\footnote{One easily checks that to have an Izumi constant, the two valuations must have the same center. This will be a running hypothesis whenever we take two valuations centered in a ring $R$.}).  Then there exists a real number $c>0 $ such that   $\mu(y)<c \mu'(y)$, for $y\in R\setminus\{0\}$. Therefore, the Izumi constant of these two valuations, namely the number $c_{R}(\mu,\mu'):= {\rm sup}_{y\in R\setminus \{0\}}\{\frac{\mu(y)}{\mu'(y)}\}$, is well-defined.
\end{theorem}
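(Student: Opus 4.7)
The plan is to deduce Izumi's inequality from Theorem \ref{finiteness discretness} by realizing both $\mu$ and $\mu'$ as the final terms of \emph{finite} MacLane key polynomial expansions over a common base valuation, and then comparing the two expansions term by term.

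First I would set up the one-variable extension needed to apply Theorem \ref{finiteness discretness}. Choose an element $x\in R$ generic with respect to both $\mu$ and $\mu'$; then $K$ is (a finite extension of) $K_0(x)$ for a subfield $K_0\subset K$. The restrictions $\nu:=\mu|_{K_0}$ and $\nu':=\mu'|_{K_0}$ are divisorial valuations centered on the local ring $R_0:=R\cap K_0$, and one proceeds by induction on the transcendence degree of $K$ over $k$; the base case reduces to essentially a single DVR, where Izumi is trivial. Analytic irreducibility of $R$ is used to ensure (after a possibly faithfully flat extension) that $R_0$ is also analytically irreducible, so that the induction hypothesis applies and the hypotheses of Theorem \ref{finiteness discretness} are preserved at each step.

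Next, Theorem \ref{finiteness discretness} furnishes finite sequences of key polynomials $Q_1,\dots,Q_N$ and $Q'_1,\dots,Q'_{N'}$ in $K_0[x]$ representing $\mu$ and $\mu'$, respectively. Any $y\in R\subset K_0[x]$ admits a finite $Q$-expansion $y=\sum_\alpha c_\alpha Q^\alpha$ with $c_\alpha\in K_0[x]$ of controlled degree, and the defining property of key polynomials gives
\[
\mu(y)=\min_\alpha\Bigl(\nu(c_\alpha)+\sum_{i=1}^N\alpha_i\,\mu(Q_i)\Bigr),
\]
with an entirely analogous formula for $\mu'(y)$. Applying the inductive Izumi bound for $\nu$ versus $\nu'$ on $R_0$ and using the division algorithm to rewrite the $Q$-expansion of $y$ in terms of the $Q'$-expansion, one obtains $\mu(y)<c\,\mu'(y)$ with an explicit $c$ depending only on the values $\mu(Q_i)$, $\mu'(Q'_j)$, the degrees of the key polynomials, and the induction constant inherited from $R_0$.

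The main obstacle is the reduction carried out in the first paragraph: one must choose $x$ \emph{simultaneously} generic for $\mu$ and $\mu'$ so that both restrictions remain divisorial, their common center in $R_0$ is the maximal ideal of an analytically irreducible local ring, and $K/K_0(x)$ is finite. This is the geometric heart of the argument; once it is in place, the finiteness of the two key polynomial sequences turns the remaining comparison into an essentially combinatorial bookkeeping exercise with MacLane's augmentation formulas.
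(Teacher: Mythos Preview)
Your high-level strategy---induction on the number of variables, with the inductive step powered by the finiteness of key polynomials---matches the paper's. The two places where the paper proceeds differently are worth noting.

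For the setup, the paper avoids the ``choose $x$ generic'' difficulty entirely. It first passes to the completion $\hat R$ (Corollary~\ref{corollay:uniqueness of extension to completion}), then invokes Cohen's structure theorem to write $\hat R\cong k[[X_1,\dots,X_n]]/I$, and finally uses Weierstrass preparation (after a linear change of coordinates) to realise $\hat R$ as the completion of $S[X_n]/J$ with $S=k[[X_1,\dots,X_{n-1}]]/(I\cap k[[X_1,\dots,X_{n-1}]])$. This makes the subring $S$ and the one-variable extension completely explicit; $S$ is already complete (hence analytically irreducible) and is visibly a domain, so the induction hypothesis applies without the faithfully-flat or genericity gymnastics you anticipate. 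Your claim that $R\subset K_0[x]$ is also delicate in your setup but becomes automatic here.

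For the comparison, the paper does \emph{not} rewrite the $Q$-expansion in terms of the $Q'$-expansion. Instead it interposes the Gaussian valuations $\mathrm{ord}_{\nu,\beta}$ and $\mathrm{ord}_{\nu',\beta'}$ (with $\beta=\mu(x)$, $\beta'=\mu'(x)$) and uses the chain
\[
\mu \;\leq\; c_1\,\mathrm{ord}_{\nu,\beta} \;\leq\; c_1 c_2\,\mathrm{ord}_{\nu',\beta'} \;\leq\; c_1 c_2\,\mu'.
\]
The first inequality is Theorem~\ref{weighted izumi}(ii) applied to the key polynomials of $\mu$ alone; the middle inequality is the inductive Izumi bound on $S$ lifted to monomials in $x$ (Lemma~\ref{prepare ord}); and the last is free, since $\mathrm{ord}_{\nu',\beta'}$ is the first MacLane approximation $\omega'_1$ to $\mu'$ and hence $\mathrm{ord}_{\nu',\beta'}\le\mu'$ on $R[x]$ automatically. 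Thus only the key-polynomial data of $\mu$ enter the bound, and the key polynomials of $\mu'$ are never needed---this is the trick that turns your ``rewrite one expansion in terms of the other'' into genuinely trivial bookkeeping (Theorem~\ref{inductive izumi}).
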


 In \cite{Izu} Izumi proved  an analogous result of Theorem \ref{final izumi} in the case where $R$ is the local algebra of a point $\xi\in X$ for a reduced and irreducible complex space $(X,\mathcal{O}_X)$, and when $\mu$ is the order function in the point $\xi$ and $\mu'$ is the pullback of $\mu$ under a morphism $\phi:(Y,\eta)\to(X,\xi)$ (However, notice that in this case the mappings $\mu$ and $\mu'$ are not necessarily valuations). In \cite{Rees} Rees stated Izumi's result in an algebraic setting and proved Theorem \ref{final izumi}. In \cite{Swan} Theorem \ref{final izumi}  is proved when $(R,\frak{m})$ is an analytically irreducible excellent domain.


In Section \ref{key poly} we describe  the main results of the theory of key polynomials.

In Section \ref{section: rees and finiteness}    we  prove the finiteness of the key polynomials of the divisorial valuation extension $(K,\nu)\subset(K(x),\mu)$, provided  that   $\mu$ is centered over an analytically irreducible local domain $(R,\frak{m})\subset K[x]$.  

In Section \ref{izumi theorem} we use the theory of key polynomials to prove  Izumi's theorem (Theorem \ref{final izumi}).  From a computational point of view, an interesting question is to compute the Izumi constants. In \cite{Swan}, the Izumi constants are computed for some special examples. In \cite{Rond}, the Izumi constants are used to give bounds for  the Artin functions which arise in the Artin approximation theory.   Here, we give explicit bounds for  the Izumi constant $c(\mu,\mu')$  in terms of the key polynomials of the valuation $\mu$. We show that in certain cases this bound is equal to the Izumi constant; For example we compute $c(\mu,\mathrm{ord}_{\nu,\beta})$ for any divisorial valuation $\mu$ which extends $\nu$ (Theorem \ref{weighted izumi}.(ii)).


{\bf Acknowledgments.} I would like to thank Bernard Teissier for his helpful comments and questions,  Shahram Mohsenipour for discussions on the valuations and Tirdad Sharif for his comments. Also, I would like  to thank the referee for pointing out a serious mistake in an earlier version of this paper, his helpful comments and interesting examples.
}

\section{Valuations and key polynomials}\label{key poly}{
In this section we fix the notation and recall the main results of the theory of key polynomials.

Throughout  this section $(K,\nu)$ is a field with a valuation $\nu$  whose value group is  an ordered subgroup of the ordered group $(\mathbf{R},<)$. However, the theory presented in this section is generalized in \cite{vaq1} and \cite{Sp2}  for valuations with value groups of arbitrary rank. If we allow that the value $\nu(y)$ of some nonzero elements $y\in K$  can be $\infty$ then we say that $\nu$ is a pseudo-valuation. A $k-$valuation $\nu$ of a field $K/k$ is a valuation of $K$ such that $\nu\mid_{k^*}=0$. In this paper, all the fields $K$ that we consider are extensions of a base field $k$ and all the valuations $\nu$ of $K$ are $k-$valuations.  We consider the field extension $L=K(x)$. In the case where $x$ is transcendental over $K$ we say $L/K$ is of transcendental type, and when $x$ is algebraic over $K$ we say $L/K$ is of algebraic type.  We assume $\mu$ is a (not necessarily divisorial) valuation on $L$ extending $\nu$, i.e., $(K,\nu)\subset (L,\mu)$.
  If $L/K$ is of algebraic type, we denote the minimal polynomial of $x$ over $K$ by $P(X)$ ($X$ is a new variable transcendental over $K$), and assume $\mathrm{deg}P(X)=N$. Notice that in the algebraic type case we have $L=K[X]/(P(X))$, and every element $y\in L$ has a unique representative $y(X)\in K[X]$ of degree strictly less than $N$.  For $y\in L $ we define $\mathrm{deg} y=\mathrm{deg} y(X)$.

 The valuation ring of the valuation $\nu$ is denoted by $R_{\nu}$; It is the ring consisting of nonzero elements whose value is $\geq 0$, and the zero element.  It is a local ring with maximal ideal $\frak{m}_{\nu}=\{y\in R_{\nu}:\ \nu(y)>0 \}$. The residue field of the valuation $\nu$ is by definition equal to the field $\kappa_{\nu}=\frac{R_{\nu}}{\frak{m}_{\nu}}$.  Let $R$ be an integral domain, and suppose $\nu$ is centered on the ring $R$,  which means $R \subseteq R_{\nu}$. In this situation the center of the valuation $\nu$ over the ring $R$ is defined to be the ideal $\frak{p}=\frak{m_{\nu}}\cap R$. By defenition, in the case $\nu$ is centered on the local ring $(R,\frak{m})$ the center of the valuation over $R$ is equal to $\frak{m}$. A valuation $\nu$ with value group isomorphic to $\mathbf{Z}$, centered over a local domain $(R,\frak{m},k)$ is called divisorial if $\mathrm{tr.deg} (\kappa_{\nu}/k)=\mathrm{dim}R-1$.  For $\phi\in\mathbf{R},$ set
$$\mathcal{P}_{\phi}(R)=\{x\in R\mid \nu(x)\geq \phi\},$$
$$\mathcal{P}^{+}_{\phi}(R)=\{x\in R\mid \nu(x)> \phi\},$$
where we agree that $0\in \mathcal{P}_{\phi}$
 for all $\phi,$ since its value is larger than any $\phi,$ so that by the properties of valuations the $\mathcal{P}_{\phi}$
 are ideals of $R.$

The graded algebra associated with the valuation $\nu$ over the ring $R$ is defined as
$$\mathrm{gr}_{\nu}R=\bigoplus_{\phi\in\mathbf{R}}\mathcal{P}_{\phi}(R)/\mathcal{P}^{+}_{\phi}(R).$$

See \cite{Te1} for the foundational facts about this graded ring and its role in the local uniformization problem.

For each non-zero element $x\in R,$ there is a unique $\phi\in \mathbf{R}$ such
that $x\in \mathcal{P}_{\phi}\setminus\mathcal{P}^{+}_{\phi};$ the
image of $x$ in the quotient
$(\mathrm{gr}_{\nu}R)_{\phi}=\mathcal{P}_{\phi}/\mathcal{P}^{+}_{\phi}$
is the {\sl initial form} $\mathrm{in}_{\nu}(x)$ of $x.$

\begin{definition}\label{defiition:key polynomials}
A sequence of key polynomials for the extension $(L,\mu)$  of $(K,\nu)$, with respect to a ring $R\subseteq R_{\mu}$, is a well-ordered set $\mathbf{U}=\{U_i\}_{i\leq \alpha} \subset R$, where $\alpha$ is an ordinal number, which has the following properties:    For each $\beta\in \mathbf{R}$ the additive group $\mathcal{P}_{\beta}(R)$ is generated by all the products of the form $c\prod_{i\leq \alpha} U_{i}^{a_i}$, where $c\in K$ and $a_i=0$ except for a finite number of $i$, such that $\nu(c)+\sum_{i\leq \alpha}a_i\beta_i\geq \beta$, where   $\beta_i=\mu(U_i)$. Moreover, the set $\mathbf{U}$ is minimal with this property.
\end{definition}

 For any  $i\leq \alpha$ we define $\mathbf{U}_{[i]}^{a}:=\prod_{j\leq i} U_j^{a_j}$, where $a\in \mathbf{N}^{i}$ and $a_j=0$ except for a finite number of $j$. All the sequences of key polynomials of the extension $(L,\mu)/(K,\nu)$ that we study are sequences of key polynomials with respect to the ring $R=K[x]$. From now on, we simply call them the sequence of key polynomials of the extension $(L,\mu)/(K,\nu)$.

 Next we define a combinatorial sequence of weighted polynomials of the ring $K[x]$, called a {\sl weighted basis of $K[x]$}. We will see that the sequence of key polynomials associated to a valuation extension $(K,\nu)\subset(L,\mu)$ in \cite{Mac1}, \cite{vaq1}, and \cite{Sp2}, are also a weighted basis of $K[x]$. But the converse is not true in general (See the discussion before Theorem \ref{MacLane criterion}). However, the notion of the weighted basis simplifies the combinatorial part of the description of the extension of the valuation $(K,\nu)$.

\begin{definition} \label{weighted monomials}
 A sequence of weighted polynomials $\mathbf{U}=\{U_i\}_{i\leq \alpha}\subset K[x]$, $\alpha$ an ordinal, with weights $\omega(U_i)=\beta_i\in \mathbf{R}$, is called a weighted basis for $K[x]$ with respect to the valuation $(K,\nu)$, when it satisfies the following conditions.
\begin{itemize}
\item[(a)] For every $i\leq \alpha$ we have
\begin{equation} \label{ind key}
U_{i+1}=U_i^{m_i}+U_i^{m_i-1}f_{i,m_i-1}+\cdots+U_if_{i,1}+f_{i,0},
\end{equation}
where $\mathrm{deg}f_{i,j}<\mathrm{deg}U_i$, for $j=0,\ldots, m_i-1$. Moreover, we have $\beta_{i+1}>m_i\beta_i$.
In the case $L/K$ is of algebraic type we have $\mathrm{deg} U_i\leq N$, for $i\leq \alpha$.

\item[(b)] For every  $i\leq \alpha$ and every  $f\in L$ there exists  expansions (called $i-$adic expansions of $f$)
\begin{equation}\label{i-adic expansion}
f=\sum_{\ell}c_{\ell}\mathbf{U}_{[i]}^{a_{\ell}},
 \end{equation}
 where $c_{\ell}\in K$, $a_{\ell}\in \mathbf{N}^{i}$, $a_{\ell,j}<m_j$ for $j< i$. In the case that $j$ is a limit ordinal,  and the set $E(j)=\{j': j'<j,\ j'+\omega=j,\ m_{j'}=1\}$ is non-empty, we allow at most for one  $j'_0\leq j'<j$ that $a_{\ell,j'}<m_j$, where $j'_0$ is the first element of the well-ordered set $E(j)$.
 And in the case $L/K$ is of algebraic type we have $\sum_{j\leq i}a_{\ell,j}\mathrm{deg}U_j\leq N$.
\item[(c)] For any $i\leq \alpha$ we define a weight map $\omega_i:K[x]\to \mathbf{R}$ as follows: For any $f\in K[x]$
\begin{equation}\label{omega_i weight}
    \omega_i(f)=\mathrm{min}_{\ell}\{\nu(c_{\ell})+\sum_{j\leq i}a_{\ell,j}\beta_j\},
    \end{equation}
where $f=\sum_{\ell}c_{\ell}\mathbf{U}_{[i]}^{a_{\ell}}$, is an $i-$adic expansion of $f$. Then we have
\begin{equation}
\omega_i(f_{i,j})+j\beta_i=m_i\beta_i,
\end{equation}
 for $j=0,\ldots, m_i-1$ in the equation (\ref{ind key}). In other words, all the components in the right hand side of the equation (\ref{ind key}) are of the same $\omega_i$-weight.
\item[(d)] When  $L/K$ is of transcendental type we have  $\alpha\leq\omega$\footnote{$\omega$ is the ordinal type of the set of natural numbers.}. In this case: If $\alpha=\omega$,  either $\mathrm{deg}_{i\to\omega}U_i=\infty$, in which case we write $U_{\omega}=0$. Or, there exists a natural number $i_0$ such that for $j\geq i_0$ we have $\mathrm{deg}U_j=\mathrm{deg}U_{i_0}$, in which case we have
    \begin{equation}\label{limit key poly}
    U_{\omega}=\lim_{i\to\omega}U_{i}\in\hat{K}[x].
    \end{equation}
 The  field $\hat{K}$ is, by definition, the completion of $K$ with respect to the valuation $\nu$. Moreover, in this case $\beta_{\omega}=\lim_{i\to\omega}\beta_i=\infty$.
\item[(e)] For any $i<\alpha$ we have $\beta_{i+1}>m_i\beta_i$ (As a result, for any $j<i$ we have $\beta_{i}>(\prod_{j\leq j'<i} m_{j'})\beta_j$).

\end{itemize}
\end{definition}

The existence of the  $i-$adic expansions is a result of the Euclidean division algorithm (\cite{Mac1}, \cite{vaq1}, \cite{Sp2}, \cite{Mog3}): Given an element $f\in L$,
 by successive division of $f$ with $U_{i}$ one can write
\begin{equation}
f=U_{i}^{r}f_j+\cdots+U_i f_1+f_0,
\end{equation}
where $\mathrm{deg}f_j<\mathrm{deg}U_{i}$. Now, for any $f_j$ let $j'$ be the largest index such that $\mathrm{deg}f_j\geq \mathrm{deg} U_{j'}$, and repeat the same procedure for $f_j$ and $U_{j'}$. This produces the $i-$adic expansion of $f$  in the equation (\ref{i-adic expansion}). This algorithm shows that in the $i-$adic expansion we have $a_{\ell,i'}<m_{i'}$, for $i'<i$.

For $i\leq \alpha$ {\sl  a monomial of $i-$adic form}  is a product $c \mathbf{U}_{[i]}^{a}$, such that $a_j<m_j$, for $j<i$ and $c\in K$.
 Thus equation (\ref{i-adic expansion}) shows that every element $f\in L$ has a unique expansion in terms of the monomials of $i-$adic form.
\begin{remark}\label{remark: af wei}
\begin{itemize}
\item[(i)] We have $\mathrm{deg}U_{i+1}=m_i\mathrm{deg}U_i$.

\item[(ii)] When $L/K$ of algebraic type, in the construction of \cite{Sp2}  the key polynomial $U_{i+1}$ is obtained by lifting to $L$ of the minimal polynomial of $\mathrm{in}U_i$ (which is an element of a suitable graded ring), so, in general the key polynomials can have degree $N$.  Some times we consider the {\sl reduced form} of the $U_i$, denoted by $\overline{U}_i$, which are the unique representations of the $U_i$ of degree $\leq N-1$ (we get $\overline{U}_i$ after dividing $U_i$ by the minimal polynomial of $x$ over $K$).  Notice that if $\mathrm{deg}(U_i)<N$ then $\overline{U}_i=U_i$. For an adic expansion $\sum_i M_i(\mathbf{U}) $ we define the reduced form of the adic expansion  by replacing every $U_j$ by $\overline{U}_j$, in every adic monomial $M_i(\mathbf{U})$.
\item[(iii)] For any $j> i$ it is not necessarily true that the $j-$adic expansion of $U_i$ is itself (For example if $m_i=1$ then $U_{i+1}=U_i-f_{i,0}$, and the $(i+1)-$adic expansion of $U_i$ is equal to $U_{i+1}+f_{i,0}$); However, we have $\omega_j(U_i)=\beta_i$.
\item[(iv)] If $\mathrm{deg}f<\mathrm{deg}U_i$ then for any $j\geq i$ the $j-$adic expansion and $i-$adic expansion of $f$ are identical. Thus, we have $\omega_j(f)=\omega_i(f)$.
\item[(v)] For any $i< \alpha$ we have $\omega_i(f)<\omega_{i+1}(f)$, for any $f\in L$.
\item[(vi)] For $i\leq \alpha$  any expansion $f=\sum_{a}c_a\mathbf{U}_{[i]}^{a}$, for $a\in \mathbf{N}^{i}$ and without any restriction on $a_j$, is called an $i-$expansion of the element $f$.
\end{itemize}
\end{remark}
The main result of the theory of the key polynomials clarifies the relation between the totality of the extensions $(K,\nu)\subset (L,\mu)$ and the weighted bases of $K[x]$ with respect to the valuation $(K,\nu)$.

\begin{theorem}\label{main key poly} {\rm (\cite{Mac1}, \cite{vaq1}, \cite{Sp2}) }
    Given a valuation extension $(K,\nu)\subset (L,\mu)$, such that $(K,\nu)$ is divisorial, there exists a weighted basis $\mathbf{U}$ of $K[x]$, with weights $\omega(U_i)=\mu(U_i)$, which is at the same time a sequence of key polynomials for $\mu$. Moreover, if $L/K$ is of transcendental type (resp., if $L/K$ is of algebraic type) for this valuation the weight maps $\omega_i$ are valuations of the field $L$ (resp., of the field $K(X)$, where $X$ is a new variable) extending $(K,\nu)$; We have $\omega_1(f)<\omega_2(f)<\cdots<\omega_{\alpha}(f)$, for any $f\in L$, and we have $\mu=\omega_{\alpha}$.
\end{theorem}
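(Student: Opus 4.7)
The plan is to construct the weighted basis inductively, following MacLane's original approach as refined by Vaquié and Spivakovsky. I would first define $\omega_1$ as a ``Gauss-type'' truncation: pick $U_1 = x - a_1$ with $a_1 \in K$ chosen so that $\beta_1 := \mu(U_1)$ is maximal among all monic degree-one polynomials (the maximum is attained because $\nu$ is divisorial, hence of rank one and discrete). Then define $\omega_1$ on $K[x]$ by writing $f = \sum_\ell c_\ell U_1^{a_\ell}$ via Euclidean division and setting $\omega_1(f) = \min_\ell\{\nu(c_\ell) + a_\ell \beta_1\}$. The first task is to check that $\omega_1$ is a valuation extending $\nu$ with $\omega_1 \le \mu$; this reduces to showing that the initial forms of monomials of the same $\omega_1$-weight are linearly independent over $\kappa_\nu$, a direct combinatorial verification using the fact that $U_1$ is transcendental over $\mathrm{gr}_\nu K$ with respect to the induced grading.

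The induction step is the main engine of the argument. Given $\omega_i$ constructed, if $\omega_i = \mu$ we stop with $\alpha = i$; otherwise choose $U_{i+1}$ to be a monic polynomial of minimal degree with $\omega_i(U_{i+1}) < \mu(U_{i+1})$. Writing the $i$-adic expansion of $U_{i+1}$ and using minimality of the degree, one obtains the recursive form (\ref{ind key}), and the inequality $\beta_{i+1} > m_i \beta_i$ follows because the initial forms of $U_i^{m_i}$ and the lower-degree terms must cancel in $\mathrm{gr}_{\omega_i} K[x]$. One then defines $\omega_{i+1}$ by formula (\ref{omega_i weight}) applied to the $(i+1)$-adic expansion. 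The key technical check is that $\omega_{i+1}$ is independent of the chosen expansion and satisfies the ultrametric inequality; both follow from the minimality of $\deg U_{i+1}$ via analysis of the kernel of the map $\mathrm{gr}_{\omega_i} K[x] \to \mathrm{gr}_{\omega_{i+1}} K[x]$ sending $\mathrm{in}_{\omega_i} U_{i+1}$ to zero, which is a principal prime. Properties (a), (c) and (e) of Definition \ref{weighted monomials} hold by construction.

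At a limit ordinal $\lambda$, set $\omega_\lambda(f) = \sup_{i < \lambda} \omega_i(f)$, which is monotone by Remark \ref{remark: af wei}(v). In the transcendental case, one splits into the two alternatives of Definition \ref{weighted monomials}(d): either $\deg U_i$ is unbounded, forcing every $f$ of bounded degree to stabilize ($i$-adic expansions of $f$ become invariant for $i$ large by Remark \ref{remark: af wei}(iv)) so $\omega_\omega(f) = \mu(f)$ and the process terminates at $\alpha = \omega$; or the degrees stabilize past some $i_0$, in which case the Cauchy sequence $\{U_i\}$ converges in $\hat K[x]$ to a limit polynomial $U_\omega$ with $\beta_\omega = \infty$, and $\mu$ is pinned down by this single additional datum. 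In the algebraic case, the constraint $\deg U_i \le N$ forces only finitely many indices $i$ with $m_i > 1$; after those are exhausted the process either terminates or produces an $\omega$-type limit, again matching (d).

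The main obstacle is showing that $\omega_\alpha = \mu$ at the terminal stage and that each $\omega_i$ is a genuine valuation, not merely a pseudo-valuation with nontrivial infinite locus. The latter is controlled by the minimality in the choice of $U_{i+1}$, which guarantees $\omega_{i+1}(f) < \infty$ for every nonzero $f \in K[x]$. The former is the combinatorial heart: one verifies, using the $i$-adic expansion and the strict growth $\omega_i(f) < \omega_{i+1}(f)$, that any $f$ with $\omega_i(f) < \mu(f)$ for all $i < \alpha$ must (in the algebraic case) be a multiple of the minimal polynomial of $x$ or (in the transcendental case) violate the degree-stabilization alternative of (d). Once $\mu = \omega_\alpha$ is established, the fact that the generators $c\,\mathbf{U}_{[\alpha]}^{a}$ of $\omega_\alpha$-weight $\ge \beta$ span $\mathcal{P}_\beta(K[x])$ is immediate from the definition of $\omega_\alpha$ via $i$-adic expansions, and minimality of the sequence $\{U_i\}$ with this property is guaranteed by the minimal-degree choice at each stage, so $\{U_i\}_{i \le \alpha}$ satisfies Definition \ref{defiition:key polynomials}.
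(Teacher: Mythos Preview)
The paper does not give its own proof of this theorem; it is stated with attribution to \cite{Mac1}, \cite{vaq1}, \cite{Sp2} and invoked as a known result. Your sketch is a faithful outline of the MacLane--Vaqui\'e inductive construction from those references, so you are essentially reproducing the cited argument rather than deviating from anything original to the paper.

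One small correction: the standard construction takes $U_1=x$ with $\beta_1=\mu(x)$ rather than maximizing $\mu(x-a_1)$ over $a_1\in K$. Your claim that this maximum is attained ``because $\nu$ is divisorial, hence of rank one and discrete'' is not justified as stated---discreteness of $\nu(K^*)$ does not by itself prevent $\sup_{a\in K}\mu(x-a)=\infty$ (think of $x$ approximated arbitrarily well by elements of $K$ in the $\nu$-adic topology). This is harmless for the overall argument, since the inductive step does not require an optimal first choice and a non-maximal $U_1$ is simply corrected at the next stage, but the justification as written should be dropped or replaced by the standard initialization $U_1=x$.
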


We remark that Theorem \ref{main key poly} is valid without the assumption of $(K,\nu)$ being divisorial (see \cite{vaq1} and \cite{Sp2}). One of the technical subtleties of the construction of the key polynomials is in the case of key polynomials indexed with limit ordinals. Later  we show that in the course of extending divisorial valuations $(K,\nu)$ to divisorial valuations $(L,\mu)$ we do not meet limit ordinals, provided that $\nu$ is centered over an analytically irreducible domain $(R,\frak{m})\subset K$.

\begin{remark}\label{remark:af main}
We have:
\begin{itemize}
\item[(i)] In the case $L/K$ is of algebraic type  for $i<\alpha$  the valuations $(K(X),\omega_i)$ are not valuations of the field $L$ (in general). But for any element $f\in L$ if the initial of the $i-$adic expansion is equal to the initial of its $i+1-$adic expansion then for  $i\leq j\leq\alpha$ we have $\omega_j(f)=\omega_{\alpha}(f)=\mu(f)$. In other words, the $i-$adic expansion of such elements suffices to determine the value of $f$.
 \item[(ii)]The converse of theorem \ref{main key poly} is not true, i.e., {\sl it is not true that to every weighted basis $\{U_i;\beta_i\}_{i\leq \alpha}$ of $K[x]$ one can associate a valuation extension $(K,\nu)\subset(L,\mu)$ such that $\mu=\omega_{\alpha}$. } More precisely, in general the weight maps $\omega_i$ associated to a weighted basis $\mathbf{U}$ are not valuations of the field $L$.
\end{itemize}
\end{remark}

In the construction of key polynomials of \cite{Sp2},  only the last weight map will be a valuation of $L$. This gives a class of examples of weights maps which are not valuations, when $L/K$ is of algebraic type.  When $L/K$ is of transcendental type the situation seems to be different;   We have  a sufficient {\sl algebraic} condition of MacLane for weights to be valuations. But, it is not clear whether MacLane's condition  is automatically satisfied by any weighted basis or not.

\begin{theorem} \label{MacLane criterion}{ \rm (\cite{Mac1} Theorem 4.2, and \cite{vaq1}, Theorem 1.2)}
Suppose $\{U_i\}_{i\leq\alpha}$ is a weighted basis of $K[x]$ with respect to the (not necessarily divisorial)  valuation $(K,\nu)$. Suppose for some $i<\alpha$,  when $L/K$ is of transcendental type (respectively, when $L/K$ is of algebraic type) all the weight maps $\omega_i$ are valuations of the field $L$ (respectively, are valuations of the field $K(X)$). If $\mathrm{in}_{\omega_i}(U_{i+1})$ is irreducible in $\mathrm{gr}_{\omega_i}K[x]$ and of minimal degree (in the sense that if for some $f\in K[x]$ we have $\mathrm{in}_{\omega_i}(U_{i+1})\mid \mathrm{in}_{\omega_i}(f)$ then $\mathrm{deg}(f)\geq \mathrm{deg}(U_{i+1})$) then the weight map $\omega_{i+1}$ is a valuation of the field $L$, when $L/K$ is of transcendental type, and a valuation of the field $K(X)$, when   $L/K$ is of algebraic type.
\end{theorem}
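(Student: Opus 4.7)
The plan is to verify the valuation axioms for $\omega_{i+1}$. The ultrametric inequality $\omega_{i+1}(f+g)\geq \min\{\omega_{i+1}(f),\omega_{i+1}(g)\}$ is immediate from the minimum formula (\ref{omega_i weight}): an $(i+1)$-adic expansion of $f+g$ is read off coefficient-wise from those of $f$ and $g$. Extension from the polynomial ring to the ambient field ($L$ in the transcendental case, $K(X)$ in the algebraic case) is automatic once multiplicativity has been established on polynomials, so the entire content of the theorem is to prove $\omega_{i+1}(fg)=\omega_{i+1}(f)+\omega_{i+1}(g)$.

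The strategy I would follow is to pass to the associated graded algebra and show $\mathrm{gr}_{\omega_{i+1}}K[x]$ is an integral domain. The central object is the graded ring homomorphism
$$\Psi:\ \mathrm{gr}_{\omega_{i}}K[x]\ \longrightarrow\ \mathrm{gr}_{\omega_{i+1}}K[x]$$
defined on homogeneous pieces by $\Psi(\mathrm{in}_{\omega_i}(h))=\mathrm{in}_{\omega_{i+1}}(h)$ when $\omega_{i+1}(h)=\omega_i(h)$ and by $0$ otherwise; this is well defined because $\omega_i$ is assumed to be a valuation. Since $\omega_{i+1}(U_{i+1})=\beta_{i+1}>m_i\beta_i=\omega_i(U_{i+1})$ by condition (a) of Definition \ref{weighted monomials} together with (c), the element $\mathrm{in}_{\omega_i}(U_{i+1})$ lies in $\ker\Psi$. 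The minimal degree hypothesis is used exactly to show that $\ker\Psi$ coincides with the principal homogeneous ideal generated by $\mathrm{in}_{\omega_i}(U_{i+1})$.

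Next, I would invoke the uniqueness of the $(i+1)$-adic expansion to show that $\mathrm{gr}_{\omega_{i+1}}K[x]$ is generated over $\mathrm{Im}(\Psi)$ by the new element $T:=\mathrm{in}_{\omega_{i+1}}(U_{i+1})$, and that $T$ is transcendental over $\mathrm{Im}(\Psi)$: any polynomial relation in $T$ over $\mathrm{Im}(\Psi)$ would produce an $(i+1)$-adic expansion $\sum_s h_s U_{i+1}^s$ with $\deg h_s<\deg U_{i+1}$ whose $\omega_{i+1}$-value strictly exceeds the minimum $\min_s\{\omega_i(h_s)+s\beta_{i+1}\}$, contradicting (\ref{omega_i weight}). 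Put together, these two ingredients yield a graded isomorphism
$$\left(\mathrm{gr}_{\omega_{i}}K[x]\,/\,(\mathrm{in}_{\omega_{i}}U_{i+1})\right)[T]\ \xrightarrow{\ \sim\ }\ \mathrm{gr}_{\omega_{i+1}}K[x].$$
Because $\omega_i$ is a valuation, $\mathrm{gr}_{\omega_i}K[x]$ is a domain; irreducibility of $\mathrm{in}_{\omega_i}(U_{i+1})$ then makes the principal ideal on the left prime, so the quotient is a domain and so is its polynomial extension by $T$. Hence $\mathrm{gr}_{\omega_{i+1}}K[x]$ is a domain and $\omega_{i+1}$ is multiplicative on $K[x]$.

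The main obstacle is the identification $\ker\Psi=(\mathrm{in}_{\omega_i}(U_{i+1}))$. One inclusion is the minimal degree hypothesis itself. For the other, I would expand $h$ in $i$-adic form and substitute $U_i^{m_i}=U_{i+1}-\sum_j f_{i,j}U_i^j$ from equation (\ref{ind key}) to convert to $(i+1)$-adic form, tracking which monomials contribute to the $\omega_i$-initial stratum: a value jump $\omega_{i+1}(h)>\omega_i(h)$ forces cancellation among the highest-$a_i$ monomials in the $i$-adic initial form, and this cancellation is controlled precisely by divisibility of $\mathrm{in}_{\omega_i}(h)$ by $\mathrm{in}_{\omega_i}(U_{i+1})$. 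Irreducibility enters here to prevent a spurious partial factorization from accounting for the value jump without the full divisibility. This bookkeeping is the technical heart of MacLane's original argument.
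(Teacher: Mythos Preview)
The paper does not supply its own proof of this theorem: it is stated as a citation of \cite{Mac1}, Theorem~4.2, and \cite{vaq1}, Theorem~1.2, and the text moves on immediately without a proof environment. There is therefore nothing in the paper to compare your argument against.

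That said, your outline is essentially the architecture of Vaqui\'e's proof in \cite{vaq1}: one builds a graded surjection whose kernel is the principal ideal $(\mathrm{in}_{\omega_i}U_{i+1})$, identifies $\mathrm{gr}_{\omega_{i+1}}K[x]$ with $(\mathrm{gr}_{\omega_i}K[x]/(\mathrm{in}_{\omega_i}U_{i+1}))[T]$, and concludes it is a domain from the irreducibility hypothesis. The one point to be careful with is a mild circularity: you define $\Psi$ as a \emph{ring} homomorphism into $\mathrm{gr}_{\omega_{i+1}}K[x]$ before $\omega_{i+1}$ is known to be multiplicative, so strictly speaking the target is not yet known to carry a ring structure (that requires at least submultiplicativity $\omega_{i+1}(fg)\geq\omega_{i+1}(f)+\omega_{i+1}(g)$). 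In the cited references this is handled either by first checking submultiplicativity directly from the expansion formula and Theorem~\ref{getting adic}-type rewriting, or (as MacLane does) by proving multiplicativity on $K[x]$ via a direct induction on degree using the $i$-adic division without ever naming the graded ring of $\omega_{i+1}$. Either fix is routine, and the remainder of your sketch---the kernel computation via the minimal-degree hypothesis and the transcendence of $T$ via uniqueness of $(i{+}1)$-adic expansions---matches the standard argument.
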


A sufficient combinatorial condition for $\omega_i$ to be a valuation can be given. Suppose $\{U_i;\beta_i\}_{i\leq \alpha}$ is a weighted basis of $K[x]$ with respect to the (not necessarily divisorial) valuation $(K,\nu)$. Let $\Phi_i=(\nu(K),\beta_1,\ldots,\beta_i)\subset \mathbf{R}$ be the group generated by the first $i-$weights. Set $n_i=[\Phi_i:\Phi_{i-1}]$. Note that we must have
\begin{equation}\label{index-power}
m_i=n_ip_i,
\end{equation}
 for some $p_i\in\mathbf{N}$ ($m_i$ is defined in (\ref{ind key})) and moreover equation (\ref{ind key}) should be of the form:
\begin{equation}\label{precise ind key}
U_{i+1}=U_i^{n_ip_i}+U_i^{n_i(p_i-1)}f_{i,n_i(p_i-1)}+\cdots+U_i^{n_i}f_{i,n_i}+f_{i,0}.
\end{equation}

\begin{theorem}\label{necessary}
With the notation of the last paragraph, if $m_i=n_i$  then the weight maps $\omega_i$ are valuations.
\end{theorem}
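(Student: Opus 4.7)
The plan is to prove the statement by induction on $i$, applying the MacLane criterion (Theorem \ref{MacLane criterion}) at each step. The base case that $\omega_1$ is a valuation is classical, corresponding to the Gauss-type extension of $\nu$ to $K[x]$. For the inductive step, assuming $\omega_j$ is a valuation for $j \leq i$, it suffices by Theorem \ref{MacLane criterion} to verify that $\mathrm{in}_{\omega_i}(U_{i+1})$ is irreducible in $\mathrm{gr}_{\omega_i} K[x]$ and of minimal degree (in the divisibility sense of that theorem).

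First I would identify the initial form of $U_{i+1}$ explicitly. The hypothesis $m_i = n_i$ means $p_i = 1$ in (\ref{index-power}), so (\ref{precise ind key}) collapses to $U_{i+1} = U_i^{n_i} + f_{i,0}$, and property (c) of Definition \ref{weighted monomials} yields $\omega_i(f_{i,0}) = n_i \beta_i = \omega_i(U_i^{n_i})$. Writing $y := \mathrm{in}_{\omega_i}(U_i)$ and $\bar{c} := \mathrm{in}_{\omega_i}(f_{i,0})$, we have $\mathrm{in}_{\omega_i}(U_{i+1}) = y^{n_i} + \bar{c}$, where $\bar{c}$ lies in the graded subring $A_i \subset \mathrm{gr}_{\omega_i} K[x]$ generated by initials of polynomials of degree $< \mathrm{deg}\,U_i$, since $\mathrm{deg}\,f_{i,0} < \mathrm{deg}\,U_i$.

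Next I would exploit the arithmetic constraint coming from $n_i = [\Phi_i : \Phi_{i-1}]$. Since the homogeneous components of $A_i$ have weights in $\Phi_{i-1}$, for an expression $\sum_k a_k y^k$ with $a_k \in A_i$ to be homogeneous of some weight $w$, each nonzero $a_k$ must satisfy $\omega_i(a_k) = w - k\beta_i \in \Phi_{i-1}$, which forces the set of exponents $k$ with $a_k \neq 0$ to lie in a single congruence class modulo $n_i$. In particular, every homogeneous element of weight $n_i \beta_i$ in $A_i[y]$ has the form $a_0 + a_{n_i}\,y^{n_i}$, which rules out any factorization $y^{n_i} + \bar{c} = p_1 p_2$ in which either factor has $y$-degree strictly between $0$ and $n_i$. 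This yields irreducibility of $\mathrm{in}_{\omega_i}(U_{i+1})$. The minimal-degree condition is then immediate: any $g \in K[x]$ with $\mathrm{in}_{\omega_i}(U_{i+1}) \mid \mathrm{in}_{\omega_i}(g)$ must contribute a term of $y$-degree $\geq n_i$ in its $i$-adic expansion, forcing $\mathrm{deg}\,g \geq n_i\,\mathrm{deg}\,U_i = \mathrm{deg}\,U_{i+1}$ by Remark \ref{remark: af wei}(i).

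I expect the principal obstacle to be rigorously pinning down the congruence argument inside $\mathrm{gr}_{\omega_i} K[x]$, in particular identifying $A_i$ with the correct graded subring, confirming that its homogeneous components have weights precisely in $\Phi_{i-1}$, and ruling out hidden algebraic relations among its generators. The conceptual content of the hypothesis $m_i = n_i$ (equivalently $p_i = 1$) is that it excludes the "wild" case in which $U_{i+1}$ contains intermediate powers $U_i^{n_i}, U_i^{2n_i}, \ldots, U_i^{n_i(p_i-1)}$; these are precisely the terms that could otherwise force $\mathrm{in}_{\omega_i}(U_{i+1})$ to factor nontrivially over $A_i$.
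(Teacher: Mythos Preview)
The paper does not give a self-contained proof of this theorem: it simply states that ``The proof of Theorem 4.7 of \cite{Mog3} can be adapted to this situation.'' Consequently there is no in-paper argument to compare your proposal against at the level of detail.

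That said, your strategy---induction on $i$ together with Theorem~\ref{MacLane criterion}---is a natural and essentially correct route. The reduction of $U_{i+1}$ to the binomial form $U_i^{n_i}+f_{i,0}$ under the hypothesis $p_i=1$ is right, and your congruence argument (exponents of $y=\mathrm{in}_{\omega_i}(U_i)$ in a homogeneous element must lie in a single residue class modulo $n_i$) correctly forces any homogeneous factor of $y^{n_i}+\bar c$ either to be a monomial in $y$ or to have $y$-degree $n_i$; combined with $\bar c\neq 0$ (from condition~(c) of Definition~\ref{weighted monomials}) this yields irreducibility, and the minimal-degree statement follows from the $i$-adic expansion as you indicate.

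The obstacle you flag is the genuine one: you are implicitly using that $\mathrm{gr}_{\omega_i}K[x]$ is a free $A_i$-module on the powers of $y$ (equivalently, that $\mathrm{in}_{\omega_i}(U_i)$ is transcendental over the graded subring $A_i$ generated by initials of elements of degree $<\mathrm{deg}\,U_i$), and that the homogeneous components of $A_i$ are supported on $\Phi_{i-1}$. Both facts are standard consequences of MacLane's theory once $\omega_i$ is known to be a valuation (which you have by induction), but they should be cited or proved explicitly, since the congruence and minimal-degree steps collapse without them.
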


\begin{proof} The proof of Theorem 4.7 of \cite{Mog3} can be adapted to this situation.
\end{proof}

In the special case where $K=k((y))$, where $k$ is algebraically closed, we have a complete combinatorial characterization of the weighted bases of $K[x]$ which correspond to the key polynomials. In \cite{FJ}, Chapter 3, Favre and Jonsson give an explicit construction of the key-polynomials of the $K[x]$. In our settings, they show that any weighted basis of $K[x]$ corresponds to a valuation only if $m_i=n_i$ (See \cite{FJ}, Corollary 2.5, and Theorem 2.29). Thus, we have

\begin{theorem}  In the case where $K=k((y))$, and $k$ is algebraically closed, the converse of Theorem \ref{necessary} is true, i.e., if $\{U_i;\beta_i\}_{i\leq\alpha}$ is a weighted basis of $K[x]$ such that the corresponding weight maps $\omega_i$ are valuations then $m_i=n_i$.
\end{theorem}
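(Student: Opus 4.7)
The plan is to proceed by induction on $i$ and to show that, at each step, the irreducibility required by Theorem \ref{MacLane criterion} together with the algebraic closedness of $k$ collapses the ``residual factor'' $p_i = m_i/n_i$ to $1$. The inductive hypothesis will be that $m_j = n_j$ for every $j < i$.

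The first step is to read equation (\ref{precise ind key}) inside the graded ring $\mathrm{gr}_{\omega_i} K[x]$. By Remark \ref{remark: af wei}(iv) applied to the $f_{i,j}$ (which have degree strictly less than $\deg U_i$), one has $\omega_i(f_{i,j}) = \omega_{i-1}(f_{i,j}) \in \Phi_{i-1}$. Combined with the equal-weight condition (c) of Definition \ref{weighted monomials}, the relation $(m_i - j)\beta_i = \omega_i(f_{i,j}) \in \Phi_{i-1}$ forces $j$ to be a multiple of $n_i$. Thus (\ref{precise ind key}) has at most $p_i + 1$ nonzero coefficients, and passing to initial forms yields
\begin{equation*}
\mathrm{in}_{\omega_i}(U_{i+1}) \;=\; \sum_{\ell=0}^{p_i} \mathrm{in}_{\omega_i}(f_{i,n_i\ell})\cdot\mathrm{in}_{\omega_i}(U_i)^{n_i\ell}.
\end{equation*}
Choose a monomial $M$ in $y$ and in $U_1,\ldots,U_{i-1}$ with $\omega_{i-1}(M) = n_i\beta_i$; this is possible because $n_i\beta_i \in \Phi_{i-1}$. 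Then $\theta := \mathrm{in}_{\omega_i}(U_i)^{n_i}/\mathrm{in}_{\omega_{i-1}}(M)$ is a homogeneous unit of degree $0$, i.e.\ an element of the residue field $\kappa_{\omega_i}$, and dividing the displayed identity by $\mathrm{in}_{\omega_{i-1}}(M)^{p_i}$ produces a relation $Q(\theta) = 0$ with
\begin{equation*}
Q(T) \;=\; T^{p_i} + \bar c_{p_i-1}T^{p_i-1} + \cdots + \bar c_0 \;\in\; \kappa_{\omega_i}[T].
\end{equation*}
The hypothesis that $\omega_{i+1}$ is a valuation and Theorem \ref{MacLane criterion} together demand that $\mathrm{in}_{\omega_i}(U_{i+1})$ be irreducible, which translates to irreducibility of $Q$ and hence to $[\kappa_{\omega_i}(\theta):\kappa_{\omega_i}] = p_i$.

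To close the argument I would then identify $\kappa_{\omega_i}$ explicitly using the inductive hypothesis $m_j = n_j$ for $j < i$: no proper algebraic residue extension has been introduced at any previous step, so $\kappa_{\omega_i}$ is generated over $k = \kappa_\nu$ by the analogously defined degree-zero elements $\theta_1,\ldots,\theta_{i-1}$ (at most one of which can be transcendental, by Abhyankar's inequality). The coefficients $\bar c_j$ of $Q$ lie in the subfield these generate, and the Newton--Puiseux theorem over the algebraically closed base $k$ identifies $\theta$, which is the class of an ``approximate $n_i$-th root'' of an element of $k((y))[U_1,\ldots,U_{i-1}]$, as already lying in that subfield. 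Hence $Q$ has a root in $\kappa_{\omega_i}$, irreducibility forces $p_i = 1$, and $m_i = n_i$.

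The main obstacle is precisely this last identification: one must control where the $\bar c_j$ live and verify that every polynomial equation of the form $Q(T) = 0$ arising from a valuative $\mathrm{in}_{\omega_i}(U_{i+1})$ has a root in the field $k(\theta_1,\ldots,\theta_{i-1})$. This is the content of the tree-theoretic classification of \cite{FJ}, Chapter~2, and in particular of their Corollary~2.5 and Theorem~2.29, which recognise each $\omega_i$ as (a limit of) a quasi-monomial valuation whose numerical data $(m_i,n_i,p_i)$ is read off from a Puiseux expansion; algebraic closedness of $k$ enters exactly through Newton--Puiseux, forcing $p_i = 1$ and completing the induction.
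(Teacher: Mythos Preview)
The paper gives no argument for this theorem: it is stated as an immediate consequence of the classification in \cite{FJ}, specifically Corollary~2.5 and Theorem~2.29, with no further unpacking. Your proposal is therefore considerably more detailed than what the paper itself offers, but it is not a self-contained proof, and it contains one genuine misstep.

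The misstep is the sentence ``the hypothesis that $\omega_{i+1}$ is a valuation and Theorem~\ref{MacLane criterion} together demand that $\mathrm{in}_{\omega_i}(U_{i+1})$ be irreducible.'' Theorem~\ref{MacLane criterion} is a one-way implication: irreducibility of the initial form is \emph{sufficient} for $\omega_{i+1}$ to be a valuation, not necessary. The converse you need can in fact be established directly---if $\mathrm{in}_{\omega_i}(U_{i+1})=\mathrm{in}_{\omega_i}(f)\cdot\mathrm{in}_{\omega_i}(g)$ with $\deg f,\deg g<\deg U_{i+1}$, then Remark~\ref{remark: af wei}(iv) gives $\omega_{i+1}(f)+\omega_{i+1}(g)=\omega_i(f)+\omega_i(g)=m_i\beta_i$, while the $(i{+}1)$-adic expansion of $fg$ shows $\omega_{i+1}(fg)>m_i\beta_i$, contradicting multiplicativity---but this argument is not the content of Theorem~\ref{MacLane criterion} and you should supply it rather than cite that theorem.

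More importantly, you yourself identify the ``main obstacle'' as the verification that $Q$ already has a root in $\kappa_{\omega_i}$, and you resolve it by invoking \cite{FJ}, Corollary~2.5 and Theorem~2.29---exactly the references the paper cites in lieu of proof. So in the end your argument is not independent of \cite{FJ}; it is an informative partial unpacking of \emph{why} the Favre--Jonsson classification forces $p_i=1$, but the decisive step (that over an algebraically closed $k$ no proper residual extension is created) is still outsourced. Since the paper's own ``proof'' is nothing but that citation, your proposal and the paper ultimately rest on the same external input.
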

\begin{definition}\label{weighted valuation}
Let $\alpha$ be an ordinal. Let us denote by $K[U]$ the polynomial ring $K[U_1,\ldots,U_{\alpha}]$. We define a sequence $\tilde \omega_i$ of Gauss valuations on the fields $K(U_1,\ldots ,U_i)$ as follows: Fixing values
  $\tilde{\omega}_i(U_j)=\beta_j$, for $j\leq \alpha$,  we extend $\tilde{\omega}_i$ to   $f(U)=\sum_{\ell}c_{\ell}\mathbf{U}_{[\alpha]}^{a_{\ell}}\in K[U_1,\ldots,U_i]$ by
  $$\tilde{\omega}_{i}(f(U))=\mathrm{min}_{\ell}\{\nu(c_{\ell})+\sum_{j\leq i}a_{\ell,j}\beta_j\}.$$

\end{definition}

The valuation $\tilde{\omega}_i$ defines a weight map on the space of the $i-$expansions of the elements of $K[x]$. More precisely, if $f_1=\sum_{a} c_a \mathbf{U}_{[i]}^{a}$ is an $i-$expansion of the element $f\in K[x]$ then we define $\tilde{\omega}_i(f_1)=\tilde{\omega}_i(\sum_{a} c_a \mathbf{U}_{[i]}^{a})$. Notice that $\tilde{\omega}_i(f)$, is not well-defined for an element $f\in K[x]$.
The next theorem gives an algorithm to get the adic expansion of the elements of $K[x]$ without making divisions (Although being general, we explain the algorithm only in the case of finitely many key polynomials, which is the case we will use later).

\begin{theorem} \label{getting adic} Suppose $\{U_i;\beta_i\}_{i\leq \alpha}$, $\alpha\in\mathbf{N}$, is a weighted basis of $K[x]$ with respect to the valuation $(K,\nu)$, and let $f\in K[x]$. Then we have:
\begin{itemize}
\item[(i)] If $f_0=\sum_{a}c_{a}U^{a}$, where $a\in \mathbf{N}^{i}$, is the $i-$adic expansion of $f$ then the $(i+1)-$adic expansion of $f$ can be obtained by the following algorithm:
    \begin{itemize}
    \item[(a)] In $f_0$ replace any occurrence of $U_{i}^{m_i}$ by its $(i+1)-$adic expansion:
     \begin{equation}
     U_i^{m_i}=U_{i+1}-U_i^{m_i-1}f_{i,m_i-1}-\cdots-U_if_{i,1}-f_{i,0}.
     \end{equation}
     Suppose $f_1=\sum_{b}c_{b}U^{b}$, where  $b\in\mathbf{N}^{i+1}$, is the resulting expansion of $f$.
    \item[(b)] In $f_1$, for any $j\leq i$ replace any occurrence of $U_{j}^{m_j}$ by its $(j+1)-$adic expansion. Suppose $f_2\in K[U]$ is the resulting expansion of $f$.
    \item[(c)] Iterate step (b), as far as possible.
    \end{itemize}
\item[(ii)] If $f_0=\sum_{a}c_{a}U^{a}$, where $a\in \mathbf{N}^{i+1}$, is the $(i+1)-$adic expansion of $f$ then the $i-$adic expansion of $f$ can be obtained by the following algorithm:
    \begin{itemize}
    \item[(a)] In $f_0$ replace any occurrence of $U_{i+1}$ by its $i-$adic expansion:
     \begin{equation}
     U_{i+1}=U_i^{m_i}+U_i^{m_i-1}f_{i,m_i-1}+\cdots+U_if_{i,1}+f_{i,0}.
     \end{equation}
     Suppose $f_1=\sum_{b}c_{b}U^{b}$, where  $b\in\mathbf{N}^{i+1}$, is the resulting expansion of $f$.
    \item[(b)] In $f_1$ for any $j<i$ replace any occurrence of $U_{j}^{m_j}$ by its $(j+1)-$adic expansion. Suppose $f_2\in K[U]$ is the resulting expansion of $f$.
    \item[(c)] Iterate step (b), as far as possible.
    \end{itemize}
\end{itemize}
Both algorithms stop after a finite number of steps and in both cases they generate a sequence of expansions for $f$: $f_1,f_2,\ldots,f_t$, where $t\in \mathbf{N}$. In the case (i), $f_t$ is equal to the $(i+1)-$adic expansion of $f$, and in the case (ii), $f_t$ is equal to the $i-$adic expansion of $f$. Moreover, in the case (i), we have $\tilde{\omega}_{i+1}(f_1)\leq \tilde{\omega}_{i+1}(f_2)\leq\cdots\leq \tilde{\omega}_{i+1}(f_t)=\omega_{i+1}(f)$, where $\tilde{\omega}_{i+1}$ is the valuation of the ring $K[U_1,\ldots,U_{i+1}]$, defined in Definition \ref{weighted valuation}, and $\omega_{i+1}$ is the weight map associated to the weighted basis $\{U_i;\beta_i\}_{i\leq \alpha}$ in Definition \ref{weighted monomials}.   And in the case (ii), we have $\tilde{\omega}_{i}(f_1)\leq \tilde{\omega}_{i}(f_2)\leq\cdots\leq \tilde{\omega}_{i}(f_t)=\omega_{i}(f)$.
\end{theorem}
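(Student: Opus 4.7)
The plan is to treat (i) and (ii) in parallel: in both cases we track a sequence of expansions $f_0,f_1,\ldots,f_t\in K[U]$ produced by iterated applications of a single substitution rule drawn from equation (\ref{ind key}). In case (i) the rule is $U_j^{m_j}\leadsto U_{j+1}-U_j^{m_j-1}f_{j,m_j-1}-\cdots-f_{j,0}$ applied for various $j\le i$; in case (ii) the initial step substitutes $U_{i+1}\leadsto U_i^{m_i}+U_i^{m_i-1}f_{i,m_i-1}+\cdots+f_{i,0}$, after which the same rule is iterated for $j<i$. Each substitution manifestly preserves the underlying element of $K[x]$, so the non-trivial claims are three: termination, correctness of the terminal expansion, and the monotonicity of the $\tilde\omega$-weights.

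For termination I would argue by a well-founded measure. The $x$-degree of $f$ is preserved by every substitution, and any monomial $\prod U_j^{a_j}$ carries $x$-degree $\sum a_j\deg U_j$, so the set of multi-indices that can ever occur is finite; this gives compactness but not termination. For the latter I would introduce a lexicographic ordering on the multi-indices of the \emph{offending} monomials, namely those containing a factor $U_j^{m_j}$ with $j$ in the relevant range (and $U_{i+1}$ in case (ii)), prioritizing the largest such $j$. When the substitution is applied to an offending monomial at its top offending index $j_0$, the $j_0$-th coordinate strictly drops (by $m_{j_0}$ in the $U_{j_0+1}$-branch, by at most $m_{j_0}-1$ in each $U_{j_0}^k f_{j_0,k}$-branch), while coordinates above $j_0$ increase only by $1$ in the $(j_0{+}1)$-coordinate of the $U_{j_0+1}$-branch, which by the maximality choice of $j_0$ does not reintroduce new offenders at a higher index. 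Correctness then follows because when the algorithm halts, no monomial contains $U_j^{m_j}$ in the prohibited range (and no $U_{i+1}$ in case (ii)), which is precisely the defining constraint of the target adic form; the uniqueness of adic expansions (from the Euclidean algorithm recalled just before the theorem) identifies $f_t$ with the canonical adic expansion.

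The weight statement reduces to a single local computation at each substitution step. Condition (c) of Definition \ref{weighted monomials} gives $\omega_j(f_{j,k})=(m_j-k)\beta_j$, so $\tilde\omega_{i+1}(U_j^k f_{j,k})=k\beta_j+(m_j-k)\beta_j=m_j\beta_j$, and condition (a) gives $\tilde\omega_{i+1}(U_{j+1})=\beta_{j+1}>m_j\beta_j$. Hence the substitution replaces a monomial of weight $m_j\beta_j$ by a sum of monomials each of weight $\ge m_j\beta_j$; multiplying by the surrounding factor $c\prod U_\ell^{b_\ell}$ adds the constant $\nu(c)+\sum b_\ell\beta_\ell$ to all weights and so preserves the inequality. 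Since the overall $\tilde\omega$-weight of an expansion is the minimum over its monomials, the chain $\tilde\omega(f_1)\le\tilde\omega(f_2)\le\cdots\le\tilde\omega(f_t)$ follows, and the terminal equality $\tilde\omega(f_t)=\omega(f)$ is immediate from the definition of $\omega_{i+1}$ (resp.~$\omega_i$) once $f_t$ is in the canonical adic form. The main obstacle I anticipate is the bookkeeping in the termination argument: since a single substitution can simultaneously lower some exponents and raise others, the lex measure must be set up carefully (and, in case (ii), one must first verify that the opening substitution of $U_{i+1}$ genuinely removes $U_{i+1}$ from every subsequent $f_k$, so that working in $K[U_1,\ldots,U_i]$ is legitimate).
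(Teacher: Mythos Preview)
The paper does not actually prove this theorem; it only says that the proof of Proposition~3.10 of \cite{Mog3} (and Lemma~6.5 of \cite{Mog2}) can be adapted to the present situation. Your proposal is therefore already more than the paper offers, and your arguments for correctness of the terminal expansion and for the $\tilde\omega$-monotonicity are sound: the local computation $\tilde\omega(U_j^kf_{j,k})=k\beta_j+(m_j-k)\beta_j=m_j\beta_j$ via condition~(c) of Definition~\ref{weighted monomials}, together with $\beta_{j+1}>m_j\beta_j$, is exactly what is needed, and passing to the minimum over monomials preserves the inequality.

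The termination argument, however, has a genuine gap. Your assertion that the $U_{j_0+1}$-branch ``does not reintroduce new offenders at a higher index'' is false: if the monomial being rewritten had $a_{j_0+1}=m_{j_0+1}-1$, then after the substitution $a_{j_0+1}=m_{j_0+1}$ and $j_0+1$ is a new offending index strictly above $j_0$. Thus the top offending index can climb and your lexicographic measure need not drop. A clean repair uses the two facts you already isolated. Along a single branch of the substitution tree, track the pair $\bigl(\deg_x M,\,-\tilde\omega(M)\bigr)$ for the current monomial $M$: in a $U_{j_0+1}$-branch the $x$-degree is unchanged (since $\deg U_{j_0+1}=m_{j_0}\deg U_{j_0}$) while $\tilde\omega(M)$ strictly increases by $\beta_{j_0+1}-m_{j_0}\beta_{j_0}>0$, as your own weight computation shows; in every $U_{j_0}^k f_{j_0,k}$-branch the $x$-degree strictly drops. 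Hence this pair strictly decreases lexicographically at each step. Along a run of $U_{j_0+1}$-branches the coefficient $c$ is unchanged and the multi-index stays in the finite set $\{a:\sum_j a_j\deg U_j\le\deg f\}$ you already identified, so $\tilde\omega(M)$ assumes only finitely many values there; together with $\deg_x M\in\{0,\dots,\deg f\}$ this gives well-foundedness, hence every branch terminates, and by finite branching the whole process does. (Your worry about $U_{i+1}$ in case~(ii) is harmless: the opening substitution eliminates every $U_{i+1}$, and the later substitutions, all at levels $j<i$, cannot reintroduce it.)
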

\noindent
\begin{proof}
The proof of Proposition 3.10 of \cite{Mog3} can be adapted to this situation (See also Lemma 6.5 of \cite{Mog2}).
\end{proof}
}
\section{Finiteness of key polynomials}\label{section: rees and finiteness}
{
There are delicate relations between valuations over a local domain $(R,\frak{m})$ and its (possible) extensions to the $\frak{m}-$adic completion $(\hat{R},m\hat{R})$. In general, such an extension need not exist and in case of the existence, such  extensions are far from being unique and the classical invariants of the extension may change in general   \cite{HeinS}, \cite{Te1}, and \cite{HOST}. However, in the case of divisorial valuations, centered over an analytically irreducible local domain, such extensions exist and are unique. The local domain $(R,\frak{m})$ is called {\sl analytically unramified} (resp., {\sl analytically irreducible}) if the $\frak{m}-$adic completion $(\hat{R},m\hat{R})$ does not contain nilpotent elements (resp., is a domain).

\begin{lemma} (\cite{Swan}, Lemma 1.1) Let $(R,\frak{m})$ be an analytically irreducible domain. Then every divisorial valuation, centered over $(R,\frak{m})$, extends naturally to a divisorial valuation centered over $(\hat{R},\frak{m}\hat{R})$, where $\hat{R}$ is the $\frak{m}-$adic completion of $R$.
\end{lemma}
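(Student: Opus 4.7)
The strategy is to realize $\nu$ as the order along a prime divisor on a geometric model of $\mathrm{Spec}(R)$, and then base-change that model to the completion. Since $\nu$ is divisorial with center $\mathfrak{m}$, one can choose an $\mathfrak{m}$-primary ideal $I\subset R$ such that $\nu$ corresponds to a prime component $E$ of the exceptional divisor of the normalized blow-up $\pi\colon X\to\mathrm{Spec}(R)$ of $I$; with this realization $R_{\nu}=\mathcal{O}_{X,E}$ and $\nu=\mathrm{ord}_{E}$.

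First I would base-change $\pi$ along $R\to\hat R$ to obtain a proper birational morphism $\hat\pi\colon\hat X\to\mathrm{Spec}(\hat R)$, taking a normalization of a dominant component if necessary. Because blow-up commutes with flat base change and because analytic irreducibility ensures that $\hat R$ is a Noetherian local domain, the scheme $\mathrm{Spec}(\hat R)$ is integral with a well-defined function field $\hat K:=\mathrm{Frac}(\hat R)\supset K$. The exceptional locus of $\hat\pi$ lies over $\mathfrak{m}\hat R$, and I would pick an irreducible component $\hat E$ of the preimage of $E$ in $\hat X$; this produces a discrete valuation $\hat\nu:=\mathrm{ord}_{\hat E}$ of $\hat K$.

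It then remains to verify three things: (a) $\hat\nu|_{K}=\nu$, which one arranges by choosing $\hat E$ so that $\mathcal{O}_{\hat X,\hat E}$ dominates $R_{\nu}=\mathcal{O}_{X,E}$ with trivial ramification; (b) $\hat\nu$ has center $\mathfrak{m}\hat R$, since $\hat E$ lies in the closed fibre of $\hat\pi$; and (c) $\hat\nu$ is divisorial, i.e.\ its residue field has transcendence degree $\dim\hat R-1=\dim R-1$ over $k$, which follows by tracking transcendence degrees through birational base change (the generic point of $\hat E$ maps to the generic point of $E$, and $\kappa(\hat E)$ is a finitely generated extension of $\kappa(E)$ of the correct relative dimension).

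The main obstacle is that the preimage of $E$ in $\hat X$ may a priori be reducible or non-reduced, so there are several naive candidates for $\hat E$, and the word ``naturally'' in the statement demands a canonical choice independent of the model $(X,E)$ chosen to realize $\nu$. This is precisely where analytic irreducibility is essential: it prevents the dominant component of the base change from splitting in a pathological way and singles out a unique DVR of $\hat K$ extending $R_{\nu}$ with the correct center and value group, so that no spurious scaling factor appears between $\hat\nu|_{K}$ and $\nu$.
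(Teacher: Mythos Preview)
The paper does not give its own proof of this lemma: it is quoted verbatim from \cite{Swan}, Lemma~1.1, and used as a black box (together with the subsequent Proposition on uniqueness) to obtain Corollary~\ref{corollay:uniqueness of extension to completion}. So there is no argument in the paper to compare your attempt against.

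On its own merits, your geometric strategy is a standard and essentially correct way to establish existence. Realizing $\nu$ as $\mathrm{ord}_E$ on a normalized blow-up of an $\mathfrak m$-primary ideal and then base-changing along $R\to\hat R$ is exactly the right picture; since blow-up commutes with flat base change and $\hat R$ is a domain, $\hat X=\mathrm{Bl}_{I\hat R}\hat R$ is again integral, and after normalizing you do obtain a prime divisor $\hat E$ dominating $E$. One point you pass over too quickly is the claim of ``trivial ramification'' in (a): you should say explicitly why the induced map of DVRs $\mathcal O_{X,E}\to\mathcal O_{\hat X,\hat E}$ is unramified. The clean reason is that the closed fibre of $\hat\pi$ is canonically identified with the closed fibre of $\pi$ (both live over the same residue field $R/\mathfrak m=\hat R/\mathfrak m\hat R$), so $E$ and $\hat E$ have the same generic point and the same local equation; flatness of $R\to\hat R$ then forces the ramification index to be $1$. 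This also disposes of your worry that the preimage of $E$ might be reducible: it is not, for the same reason. Finally, note that the paper separates existence (this lemma) from uniqueness (the next Proposition), so the ``naturality'' you are trying to build into the construction is actually handled by the paper via the elementary Cauchy-sequence argument that any two rank-one extensions to $\hat R$ must agree.
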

\begin{proposition}
Suppose $\nu$ is a valuation of rank $1$, centered over the Noetherian local domain $(R,\frak{m})$. Assume that $\nu$ extends to a valuation $\mu$ of rank $1$, centered over the local ring $(\hat{R},\frak{m}\hat{R})$, where $\hat{R}$ is the $\frak{m}-$adic completion of $R$. Then such an extension is unique. Moreover, given any $0\neq f=\{f_i\}_{i\in \mathbf{N}}\in \hat{R}$, where $f_i$ is a Cauchy sequence in $R$, there exists $i_0\in \mathbf{N}$ such that for all $j\geq i_0$ we have $\mu(f)=\nu(f)$.
\end{proposition}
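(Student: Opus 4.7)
The plan is to exploit the interplay between the rank-$1$ hypothesis and the $\frak{m}$-adic topology to show that any such extension $\mu$ must be $\frak{m}\hat R$-adically continuous, after which both the stabilization statement and the uniqueness assertion will follow immediately.

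First I would establish the key quantitative estimate: there exists a real number $\mu_0>0$ such that $\mu(g)\geq n\mu_0$ for every $g\in \frak{m}^n\hat R$ and every $n\geq 1$. Since $\hat R$ is Noetherian, the ideal $\frak{m}\hat R$ is finitely generated, say by $g_1,\ldots,g_s$; because $\mu$ is centered on $(\hat R,\frak{m}\hat R)$, each $\mu(g_i)$ is strictly positive, so setting $\mu_0=\min_i \mu(g_i)>0$ and writing any element of $\frak{m}^n\hat R$ as an $\hat R$-linear combination of $n$-fold products of the $g_i$, the ultrametric inequality gives the bound. The rank-$1$ assumption lets me view all these values inside $(\mathbf{R},<)$, so $\mu_0$ and $n\mu_0$ make sense and $n\mu_0\to\infty$.

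Next I would use this estimate to prove the stabilization statement. Given $0\neq f\in\hat R$ represented by a Cauchy sequence $\{f_i\}\subset R$, the Cauchy condition for the $\frak{m}$-adic topology means that for every $n$ there is an index after which $f-f_j\in\frak{m}^n\hat R$, hence $\mu(f-f_j)\geq n\mu_0\to\infty$. Since $f\neq 0$ the value $\mu(f)$ is a finite real number, so for all $j$ sufficiently large we have $\mu(f-f_j)>\mu(f)$; the ultrametric inequality applied to $f_j=f-(f-f_j)$ then yields $\mu(f_j)=\mu(f)$. But $f_j\in R$ and $\mu$ extends $\nu$, so $\mu(f_j)=\nu(f_j)$, giving the desired $\mu(f)=\nu(f_j)$ for all $j\geq i_0$.

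Finally, uniqueness is a direct consequence: if $\mu$ and $\mu'$ are two rank-$1$ extensions of $\nu$ centered over $(\hat R,\frak{m}\hat R)$, then both satisfy the estimate above (with possibly different constants $\mu_0,\mu_0'$), and by the previous paragraph both agree with $\nu(f_j)$ on a cofinal tail of any Cauchy representative of $f$, so $\mu(f)=\mu'(f)$ for every $0\neq f\in\hat R$. The only real obstacle I anticipate is making sure the estimate $\mu(\frak{m}^n\hat R)\geq n\mu_0$ is genuinely available — this is where the Noetherian hypothesis (via finite generation of $\frak{m}\hat R$) and the rank-$1$ hypothesis (via the embedding of the value group in $\mathbf{R}$, so that $n\mu_0\to\infty$) are both crucial, and the rest of the argument is a straightforward application of the ultrametric inequality.
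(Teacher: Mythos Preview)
Your proof is correct and follows essentially the same route as the paper: use Noetherianity to produce a positive lower bound $\mu_0$ for $\mu$ on $\frak{m}\hat R$, deduce $\mu(f-f_j)\to\infty$ from the Cauchy condition, and conclude $\mu(f)=\nu(f_j)$ for large $j$ via the ultrametric inequality, from which uniqueness is immediate. The paper's argument is more terse (it writes $\nu(\frak{m})$ for your $\mu_0$ and does not spell out the uniqueness step separately), but the ideas are identical.
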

\begin{proof}
As $(R,\frak{m})$ is Noetherian $\nu(\frak{m})>0$ exists. Choose $i_0\in \mathbf{N}$ such that $i_0\nu(\frak{m})>\mu(f)\geq 0$. We have $f-f_j\in \frak{m}^j$. Thus,  for $j\geq i_0$ we have $\mu(f-f_j)>\mu(f)$, which shows that $\mu(f)=\mu(f_j)=\nu(f_j)$.
\end{proof}
The last two results give us:
\begin{corollary}\label{corollay:uniqueness of extension to completion}
Let $(R,\frak{m})$ be an analytically irreducible domain. Then for every divisorial valuation centered over $(R,\frak{m})$   there is a unique extension of $\nu$ to a divisorial valuation centered over $(\hat{R},m\hat{R})$.
\end{corollary}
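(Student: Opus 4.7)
The plan is to obtain the corollary as a direct combination of the two preceding results in this paragraph. Existence of an extension is already given by the Lemma, so the only remaining issue is uniqueness, and the Proposition supplies this as soon as we verify that its hypotheses are met.

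First I would observe that any divisorial valuation has value group isomorphic to $\mathbf{Z}$, hence is of rank one. Therefore, if $\nu$ is a divisorial valuation centered over the analytically irreducible (in particular Noetherian) local domain $(R,\mathfrak{m})$, and $\hat\nu$ is any extension of $\nu$ to a divisorial valuation centered over $(\hat R,\mathfrak{m}\hat R)$, then both $\nu$ and $\hat\nu$ are rank one valuations. This places us exactly in the setting of the Proposition.

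Next, applying the Lemma, at least one such extension $\hat\nu$ exists. Now suppose $\hat\nu_1$ and $\hat\nu_2$ are two divisorial valuations of $\hat R$, both centered over $\mathfrak{m}\hat R$ and both extending $\nu$. Fix an arbitrary nonzero element $f\in\hat R$ and write it as the limit of a Cauchy sequence $\{f_i\}_{i\in\mathbf{N}}\subset R$. By the Proposition applied to $\hat\nu_1$, there exists $i_1$ such that $\hat\nu_1(f)=\nu(f_j)$ for every $j\geq i_1$; similarly, there exists $i_2$ such that $\hat\nu_2(f)=\nu(f_j)$ for every $j\geq i_2$. Choosing $j\geq\max\{i_1,i_2\}$ yields $\hat\nu_1(f)=\nu(f_j)=\hat\nu_2(f)$, and since $f$ was arbitrary we conclude $\hat\nu_1=\hat\nu_2$.

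I do not expect any serious obstacle here: the content of the corollary is essentially packaging existence (from the Lemma) and uniqueness (from the Proposition). The only small point to check is that the divisorial hypothesis is used only to invoke the Lemma and to guarantee the rank one condition needed by the Proposition; both are immediate.
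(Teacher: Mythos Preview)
Your proposal is correct and follows exactly the approach intended in the paper, which simply records the corollary as an immediate consequence of the preceding Lemma (existence) and Proposition (uniqueness). The only minor quibble is the parenthetical ``analytically irreducible (in particular Noetherian)'': analytic irreducibility does not by itself imply Noetherianity, but in the paper's setting the local domains under consideration are Noetherian, so the Proposition applies.
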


Here we mention an obvious result
\begin{theorem}\label{theorem:analytical irreducibility of valuation ring} Let $\nu$ be a divisorial valuation centered over an analytically irreducible local domain $(R,\frak{m})$. Then $R_{\nu}$ is analytically irreducible.
\end{theorem}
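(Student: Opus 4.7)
The plan is to exploit the fact that a divisorial valuation is, by definition, of rank one with value group $\mathbf{Z}$, so its valuation ring $R_\nu$ is a discrete valuation ring. The hypothesis that $R$ itself is analytically irreducible is not actually used for the conclusion; the statement concerns $R_\nu$ alone, and every DVR is automatically analytically irreducible with respect to its own maximal ideal. So the whole content of the theorem is the passage from ``rank-one discrete valuation'' to ``complete rank-one discrete valuation.''

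First I would fix a uniformizer $\pi\in R_\nu$, so that $\frak{m}_\nu=(\pi)$ and every nonzero element of $R_\nu$ is uniquely of the form $u\pi^n$ with $u\in R_\nu^{\times}$ and $n\in\mathbf{N}$. Since $R_\nu$ is Noetherian of dimension one, Krull's intersection theorem yields $\bigcap_n(\pi)^n=0$, so the canonical map $R_\nu\to\widehat{R_\nu}$ is injective and $\pi$ remains a non-nilpotent element of the completion.

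Next I would observe that $\widehat{R_\nu}$ is a complete local Noetherian ring whose maximal ideal equals $\pi\widehat{R_\nu}$, and which is itself separated since $\bigcap_n(\pi\widehat{R_\nu})^n=0$ by Krull's theorem in the complete setting (or directly from the construction of the completion as an inverse limit). Hence every nonzero element of $\widehat{R_\nu}$ has a well-defined $\pi$-adic order and can be uniquely written as $\hat{u}\pi^n$ with $\hat{u}\in\widehat{R_\nu}^{\times}$. This makes $\widehat{R_\nu}$ a complete DVR, in particular a domain, which is exactly the claim.

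I do not anticipate any serious obstacle here: the statement is essentially the observation that a rank-one discrete valuation ring is already ``almost complete,'' in the sense that its completion is again a DVR of the same form. The role of the theorem in the surrounding theory is presumably to allow the passage from $R_\nu$ to $\widehat{R_\nu}$ without losing integrality, so that Corollary \ref{corollay:uniqueness of extension to completion} and the constructions with key polynomials of $(L,\mu)$ can be transported to the completion unambiguously.
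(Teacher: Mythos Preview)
Your argument is correct and matches the paper's reasoning: the paper states this as ``an obvious result'' with no proof, and the obvious reason is exactly the one you give---since $\nu$ is divisorial its value group is $\mathbf{Z}$, so $R_\nu$ is a DVR, and the $\frak{m}_\nu$-adic completion of a DVR is again a (complete) DVR, hence a domain. Your observation that the analytic irreducibility of $(R,\frak{m})$ is not used for this particular statement is also correct; that hypothesis is only needed elsewhere (to extend $\nu$ to $\hat R$).
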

Here, we notice an easy consequence of the commuting of  the completion with the quotient:
\begin{lemma}\label{lemma:invariance of minimal poly}
Suppose $R[x]$ is an analytically irreducible local domain, and $K$ is the quotient field of $R$. Assume that $x$ is algebraic over $K$. Let $S$ (resp., $\hat{R}$ ) be the completion of $R[x]$ (resp., the completion of $R$) with respect to their (respective) maximal ideals. And, assume that $\hat{K}$ is the quotient field of $\hat{R}$. Then, the minimal polynomial of the element $x\in R$ over $K$ is identical to the minimal polynomial of  $x\in S$ over $\hat{K}$.
\end{lemma}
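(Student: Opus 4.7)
The plan is to realize the quotient ring $\hat K[X]/(P(X))$ as a localization of the completion $S$, and then deduce irreducibility of $P$ over $\hat K$ from the hypothesis that $S$ is a domain. Let $P(X)\in K[X]$ denote the minimal polynomial of $x$ over $K$, of degree $N=[L:K]$ with $L=K(x)$. Since $P(x)=0$ already in $R[x]$ and this relation is preserved under the canonical ring map $R[x]\to S$, the minimal polynomial of $x\in S$ over $\hat K$ automatically divides $P(X)$; the task reduces to showing that $P(X)$ is irreducible in $\hat K[X]$.

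First I would invoke the finiteness of $R[x]$ as an $R$-module, which is standard in this setup: the elements $1,x,\dots,x^{N-1}$ generate $R[x]$ over $R$ in the integral situation made implicit by the hypothesis that $R[x]$ is a Noetherian local ring with $\mathfrak{m}R[x]\subseteq\mathfrak{M}$. Compatibility of $\mathfrak{m}$-adic completion with tensor products of finitely generated modules over a Noetherian ring then yields $S\cong R[x]\otimes_R\hat R$, and flatness of $\hat R$ over $R$ gives an injection $\hat R\hookrightarrow S$, hence an embedding $\hat K=\mathrm{Frac}(\hat R)\hookrightarrow\mathrm{Frac}(S)$. Tensoring $S\cong R[x]\otimes_R\hat R$ once more over $\hat R$ with $\hat K$, and using the identifications $R[x]\otimes_R K=L$ and $L\otimes_K\hat K\cong\hat K[X]/(P(X))$, I obtain
\begin{equation*}
S\otimes_{\hat R}\hat K\;\cong\;R[x]\otimes_R\hat K\;\cong\;L\otimes_K\hat K\;\cong\;\hat K[X]/(P(X)).
\end{equation*}

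The left-hand side is the localization of $S$ at the multiplicative set $\hat R\setminus\{0\}$; since $S$ is a domain (by the analytic irreducibility hypothesis on $R[x]$) and $\hat R\setminus\{0\}$ consists of non-zero-divisors in $S$, this localization embeds into $\mathrm{Frac}(S)$ and is itself a domain. Consequently $\hat K[X]/(P(X))$ is a domain, which in the principal ideal domain $\hat K[X]$ forces $P(X)$ to be irreducible. Being monic and vanishing at $x\in S$, $P(X)$ is therefore the minimal polynomial of $x$ over $\hat K$. The main technical obstacle I expect is the clean justification of $S\cong R[x]\otimes_R\hat R$ under the stated hypotheses, which rests on $R[x]$ being a finite $R$-module; once that is secured, the remaining steps are routine manipulations with completions and flat extensions.
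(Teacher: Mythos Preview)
Your proposal is correct and is exactly the argument the paper has in mind. The paper does not give a proof of this lemma; it only introduces it with the remark that it is ``an easy consequence of the commuting of the completion with the quotient.'' Your chain of isomorphisms $S\cong R[x]\otimes_R\hat R$ and $S\otimes_{\hat R}\hat K\cong \hat K[X]/(P(X))$ is precisely that commuting, written out, and your deduction of the irreducibility of $P$ over $\hat K$ from the fact that $S$ is a domain is the natural conclusion. Your identification of the finiteness of $R[x]$ over $R$ (equivalently, that the $\mathfrak{M}$-adic and $\mathfrak{m}R[x]$-adic topologies on $R[x]$ coincide) as the one point requiring care is apt; the paper is silent on this, but in its intended application the local ring in question is a valuation ring $R_\mu$ with $x\in R_\mu$ integral over $R_\nu$, so the hypothesis is available.
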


Finally, we are ready to prove the finiteness result:
\begin{theorem} \label{finiteness discretness} If $(K,\nu)\subset(L,\mu)$ is a valuation extension and $\{U_i;\beta_i\}_{ i\leq \alpha}$ is a weighted basis of $K[x]$ with respect to $(K,\nu)$ such that $\mu=\omega_{\alpha}$ and $\nu$, $\mu$ be divisorial valuations. Moreover, assume that $\mu$ is centered over an analytically irreducible domain $(R,\frak{m})\subset L$.  Then  the number of key polynomials of the divisorial valuation $(L,\mu)$ is finite, i.e., we have $\alpha<\omega$.
\end{theorem}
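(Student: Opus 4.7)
The plan is to argue by contradiction, supposing $\alpha\geq\omega$ so that the segment $\{U_i\}_{i<\omega}$ is infinite. Because $\mu$ is divisorial, each $\beta_i=\mu(U_i)$ is an integer, and property (e) together with $m_i\geq 1$ forces the $\beta_i$ to be strictly increasing (and eventually positive) integers, whence $\beta_i\to\infty$ as $i\to\omega$.

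I would next show that $\bigl(U_i(x)\bigr)_i$ is a Cauchy sequence in $L$ under the $\mu$-topology. In the algebraic type case, the bound $\deg U_i\leq N$ from (a) combined with $\deg U_{i+1}=m_i\deg U_i$ (Remark \ref{remark: af wei}(i)) forces the degrees to stabilize at some $d\leq N$ beyond an index $i_0$, so $m_i=1$ and equation \eqref{ind key} becomes $U_{i+1}=U_i+f_{i,0}$ with $\deg f_{i,0}<\deg U_i$. Remark \ref{remark: af wei}(iv) then yields $\mu\bigl(f_{i,0}(x)\bigr)=\omega_i(f_{i,0})=\beta_i$, and the analogous behavior in the transcendental case is furnished by (d). Since by Theorem \ref{theorem:analytical irreducibility of valuation ring} the valuation ring $R_{\mu}$ is analytically irreducible (being a DVR, its $\mu$-adic completion $\hat{R}_{\mu}$ is a complete DVR with fraction field $\hat{L}^{\mu}$), the Cauchy sequence $U_i(x)$ converges in $\hat{L}^{\mu}$ to an element of infinite $\hat{\mu}$-value, hence to $0$.

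The final step is to derive a contradiction from $U_i(x)\to 0$ in $\hat{L}^{\mu}$. In the algebraic type case, the monic polynomials $\overline{U}_i(X)\in K[X]$ (of degree $\leq N-1$ after passing to the reduced forms of Remark \ref{remark: af wei}(ii)) would have to converge coefficientwise in the $\mathfrak{m}$-adic topology of $R$ to a nonzero monic polynomial $U_\infty(X)\in\hat{K}[X]$---with $\hat{K}$ the fraction field of the $\mathfrak{m}$-adic completion $\hat{R}$, a domain by analytic irreducibility of $R$---of degree $\leq N-1$ satisfying $U_\infty(x)=0$. This contradicts Lemma \ref{lemma:invariance of minimal poly}, which asserts that the minimal polynomial of $x$ over $\hat{K}$ coincides with $P(X)$ of degree $N$. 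In the transcendental type case, property (d) already supplies directly either a nonzero limit polynomial $U_\omega\in\hat{K}[X]$ with $\beta_\omega=\infty$, contradicting divisoriality, or an infinite-degree formal limit vanishing at the transcendental $x$, which is impossible. The main technical obstacle lies in justifying this last step, namely the passage from $\mu$-convergence of the evaluations $U_i(x)$ in $\hat{L}^{\mu}$ to the coefficientwise convergence of the polynomials $U_i(X)$ in the $\mathfrak{m}$-adic completion of the base; one must carefully reconcile the two completions of $R$ (with respect to $\mu$ and with respect to $\mathfrak{m}$) and invoke Lemma \ref{lemma:invariance of minimal poly}, and it is precisely here that analytic irreducibility of $R$ enters in an essential way.
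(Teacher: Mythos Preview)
Your treatment of the two cases diverges from the paper's in an important way, and in the transcendental case there is a genuine gap.

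\textbf{Transcendental case.} The paper does \emph{not} argue via completions or via property~(d) here. Instead it gives a residue-field argument: assuming $\alpha=\omega$, it shows by induction on $i$ that every element $\iota\bigl(c_a\mathbf{U}_{[i]}^a / c_b\mathbf{U}_{[i]}^b\bigr)\in\kappa_\mu$ is algebraic over $\kappa_\nu$, using the relation~(\ref{precise ind key}) to produce an explicit algebraic dependence for $\iota(U_i^{n_i}/A)$. This yields $\mathrm{tr.deg}_k\kappa_\mu=\mathrm{tr.deg}_k\kappa_\nu=\mathrm{tr.deg}_kL-2$, contradicting divisoriality of $\mu$. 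This argument works uniformly and, as the paper notes afterward, does not require analytic irreducibility. Your approach via~(d) handles only the subcase where the degrees stabilize (and even there one must justify extending $\mu$ to $\hat K(x)$); in the subcase $\deg U_i\to\infty$ your sentence ``an infinite-degree formal limit vanishing at the transcendental $x$, which is impossible'' does not constitute an argument---there is no formal limit (the notation $U_\omega=0$ in~(d) is merely a convention), and you give no reason why $\mu=\omega_\omega$ cannot be divisorial in that situation. The residue-field idea is the missing ingredient.

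\textbf{Algebraic case.} Here your overall strategy matches the paper's: eventually $m_i=1$, so $\overline U_{i+1}=\overline U_i+f_{i,0}$; one forms a coefficientwise limit $U_\infty$, argues via Lemma~\ref{lemma:invariance of minimal poly} that it is nonzero, and obtains $\mu(U_\infty)=\infty$, a contradiction. The difference is in execution. You first observe $\mu(f_{i,0})=\beta_i\to\infty$, hence $U_i(x)\to 0$ in the $\mu$-topology, and then propose to \emph{deduce} coefficientwise convergence from this---a step you yourself flag as the ``main technical obstacle'' and do not carry out. The paper proceeds in the opposite direction and thereby avoids the obstacle: it proves \emph{directly} that the $K$-coefficients of $f_{i,0}$ tend to zero $\nu$-adically, by writing the reduced $i$-adic expansion $f_{i,0}=\sum_j c_{i,j}\overline{\mathbf U}_{[i]}^{(j)}$, observing that for $i\ge i_0$ only $U_k$ with $k<i_0$ appear so that $\omega_i(\overline{\mathbf U}_{[i]}^{(j)})\le\sum_{k<i_0}(n_k-1)\beta_k=:\beta_{i_0}^*$ is bounded, and concluding $\nu(c_{i,j})\ge\beta_i-\beta_{i_0}^*\to\infty$. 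This is the substantive computation you are missing. (A minor point: the relevant completion for the coefficients is $\widehat{R_\nu}$ with its $\nu$-adic topology---shown complete via \cite{Te1}, Proposition~5.10---rather than the $\mathfrak m$-adic topology on $R$; also, after reduction the $\overline U_i$ need not be monic when $\deg U_i=N$.)
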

\noindent
\begin{proof} As $(K,\nu)$ is a divisorial valuation, we have $\mathrm{dim}\nu=\mathrm{tr.deg}_k\kappa_{\nu}=\mathrm{tr.deg}_k K-1$.
Suppose we have $\alpha\geq \omega$. As the extended valuation $\mu$ is discrete, we see that $\alpha\leq \omega$ ($\beta_{\omega}\geq \lim_{i\to\omega}\beta_i=\infty$). Thus, we only need to consider the case $\alpha=\omega$. We distinguish the two cases of the transcendental and algebraic type:
 \begin{itemize}
\item If $L/K$ is of transcendental type, and $\alpha=\omega$, we show that
\begin{equation}\label{dim val divisorial}
\mathrm{dim}_k\kappa_{\mu}=\mathrm{dim}_k\kappa_{\nu}=\mathrm{tr.deg}_kL-2.
\end{equation} Hence $(L,\mu)$ cannot be a divisorial valuation (because if $\mu$ was a divisorial valuation then $\mathrm{dim}_k\kappa_{\mu}=\mathrm{tr.deg}_kL-1$). To prove equation (\ref{dim val divisorial}) first notice that as $(L,\mu)$ is an extension of $(K,\nu) $, we have $\kappa_{\nu}\subseteq \kappa_{\mu}$. It is sufficient to show that $\kappa_{\mu}$ is an algebraic extension of $\kappa_{\nu}$. Consider the natural map $\iota: R_{\mu}\to \kappa_{\mu}$. Clearly, it is sufficient to prove $\iota(\frac{c_a\mathbf{U}_{[i]}^a}{c_b\mathbf{U}_{[i]}^{b}})$ is algebraic over $\kappa_{\nu}$, for any $i<\alpha$ and $a,b\in \mathbf{N}^i$ such that  $\frac{c_a\mathbf{U}_{[i]}^a}{c_b\mathbf{U}_{[i]}^{b}}\in R_{\mu}$. We prove this by induction on $i$. Note that, without loss of generality,  we can assume $a_i>0$ and $b_i=0$.  Suppose the claim is proved for $i-1$,  we prove it for $i$. Let $M=\iota(\frac{c_a\mathbf{U}_{[i]}^a}{c_b\mathbf{U}_{[i]}^{b}})\neq 0$. Write $n_i\beta_i=\beta_0+\sum_{j<i}m_j\beta_j$, where $0\leq m_j<n_j$ and $\beta_0\in \nu(K)$, and set $A=cU_1^{m_1}\cdots U_{i-1}^{m_{i-1}}$, where $\nu(c)=\beta_0$. As $M\neq 0$, we have $\mu(c_a\mathbf{U}_{[i]}^a)=\mu(c_b\mathbf{U}_{[i]}^b)$ which shows that $a_i=n_iq_i$, for some $q_i\in \mathbf{N}$.  Set $B=\frac{c_aA^{q_i}\mathbf{U}_{[i]}^a}{U_i^{a_i}}.$   Notice that $\frac{B}{c_b\mathbf{U}_{[i]}^b},\frac{c_a\mathbf{U}_{[i]}^a}{B}\in R_{\mu}$ and we have

$$M=\iota(\frac{B}{c_b\mathbf{U}_{[i]}^b})\iota(\frac{c_a\mathbf{U}_{[i]}^a}{B})=
\iota(\frac{B}{c_b\mathbf{U}_{[i]}^b}){\iota(\frac{U_i^{n_i}}{A})}^{q_i}.$$
 By the induction hypothesis, the factor $\iota(\frac{B}{c_b\mathbf{U}_{[i]}^b})$ is algebraic over $\kappa_{\nu}$. Hence, we should only show that  $Z=\iota(\frac{U_i^{n_i}}{A})$ is algebraic over $\kappa_{\nu}$. Dividing both sides of  equation (\ref{precise ind key}) by $A^{p_i}$, we have (notice that by (\ref{index-power}): $m_i=n_ip_i$)
\begin{equation}\label{algebraic dependence}
Z^{p_i}+\iota(\frac{f_{i,n_i(p_i-1)}}{A})Z^{p_i-1}+\cdots+\iota(\frac{f_{i,n_i}}{A^{p_i-1}})Z+
\iota(\frac{f_{i,0}}{A^{p_i}})=\iota(\frac{U_{i+1}}{A^{p_i}})=0.
\end{equation}
Notice that, by the induction hypothesis, the coefficients of equation (\ref{algebraic dependence}) are algebraic over $\kappa_{\nu}$. Thus (\ref{algebraic dependence}) shows that $Z$ is algebraic over $\kappa_{\nu}$.

\item  If $L/K$ is of algebraic type and $\alpha=\omega$. By Theorem \ref{theorem:analytical irreducibility of valuation ring} the valuation ring $R_{\mu}$ is analytically irreducible, thus the valuation $\mu$ extends uniquely to a valuation (denoted again by $\mu$) to the $\frak{m}_{\mu}-$adic completion $\widehat{R_{\mu}}$. We construct a non-zero element $U_{\infty}\in\widehat{R_{\mu}}$ which is a  {\sl coefficient-wise  limit} for the sequence of the reduced key polynomials  $\{\overline{U}_i\}_{i\in\mathbf{N}}$ (Remark \ref{remark: af wei}.(ii)). There is some  $i_0\in\mathbf{N}$ such that for $i\geq i_0$ we have $m_i=1$ (Suppose this is not the case, so there are infinite number of $i$ such that $m_i>1$, but by  Definition \ref{weighted monomials}.(e) we have $\beta_{\omega}>(\prod_{i<\omega}m_i)\beta_1$. Thus, we have $\nu(U_{\omega})=\beta_{\omega}=\infty$ which is a contradiction).  For $i\geq i_0$, we set $\overline{U}_i=a_{i,N-1}x^{N-1}+\cdots+a_{i,1}x+a_{i,0}$, where $a_{i,t}\in K$  (Recall that $N$ is degree of the minimal polynomial of the element $x$ over $K$). For $i\geq i_0$, consider the equality $\overline{U}_{i+1}=\overline{U}_{i}+f_{i,0}$ (The reduced form of equation (\ref{ind key})). Let $f_{i,0}=\sum_{j}c_{i,j}\mathbf{\overline{U}}_{[i]}^{(j)}$ be the reduced $i-$adic expansion of $f_{i,0}$. As $m_{k}=1$, for $k\geq i_0$, the power of $\overline{U}_k$ is zero in any  adic monomial $\mathbf{\overline{U}}_{[i]}^{(j)}$ of $f_{i,0}$. So, for these  adic monomials  we have $\omega_{i}(\mathbf{\overline{U}}_{[i]}^{(j)})\leq\sum_{k<i_0}(n_k-1)\beta_k=\beta_{i_0}^{*}$. But, we have $\beta_i\to\infty(i\to\omega)$, so for any $j$, we have $\nu(c_{i,j})\to\infty(i\to \infty)$.  Now, for any $t\leq N-1$ we have $a_{i+1,t}-a_{i,t}\in \langle c_{i,j}\rangle_{j}$. This shows that for any $t\leq N-1$ the sequence $\{a_{i,t}\}_{i\in\mathbf{N}}$ is a Cauchy sequence for the $\nu-$adic topology in      $\widehat{R_{\nu}}$.   But, the ring $\widehat{R_{\nu}}$  is complete for the $\widehat{\frak{m}_{\nu}}-$adic topology, so by \cite{Te1}, Proposition 5.10, it is complete for the $\nu-$adic topology as well. Thus, we have $a_{\infty,t}:=\lim_{i\to\omega}a_{i,t}\in \widehat{R_{\nu}} \subset \widehat{R_{\mu}}$ is well-defined. In consequence, the element $U_{\infty}:=a_{\infty,N-1}x^{N-1}+\cdots+a_{\infty,1}x+a_{\infty,0}\in  \widehat{R_{\mu}}  $ is well-defined. Moreover, by Lemma \ref{lemma:invariance of minimal poly} the element $U_{\infty}$ is  non-zero, and thus $\mu(U_{\infty})$ is finite. By the construction of $U_{\infty}$, it is clear that for $i\geq i_0$ we have $\mathrm{in}_{\omega_i}(U_{\infty})=U_i$. So, we have $\omega_i(U_{\infty})=\beta_i$. This shows that $\mu(U_{\infty})\geq \lim_{i\to\omega}\beta_i=\infty$ which is a contradiction.
\end{itemize}
\end{proof}

\begin{remark}

The proof shows that in the case $L/K$ is of transcendental type we do not need the analytical irreducibility condition to meet the finiteness result.

\end{remark}

Here we give an example that shows that the analytical irreducibility is necessary in the case $L/K$ is of algebraic type\footnote{I am grateful to the referee for pointing out this example.}:
\begin{example}
Assume that $\mathrm{char}(k) \neq 2$. Let $K = k(y)$, $\nu$ the $y-$adic valuation. Let $L$ be the field of fractions of the integral
domain $k[x,y]/
(x^2-y^2-y^3)$. Then $\nu$ admits two extensions to $L$; their value groups can
both be identified with $\mathbf{Z}$, which we view as the value group of $\nu$. Let $\mu$ be the
extension characterized by the fact that $\mu(x + y) = 2$.  Let $\sqrt{1+y}=\sum_{i=0}^{\infty}b_iy^i$
 be
the Taylor expansion of  $\sqrt{1+y}$. Set $U_1=x$. Then the construction of the key polynomials gives an
infinite sequence $U_i = x+\sum_{j=1}^{i-1}b_{j-1}y^j$, for $i\geq 2$. One can show that $\mu(U_i)=\beta_i=i$, for $i\in\mathbf{N}$. This gives us an infinite sequence of key polynomials $\{U_i;\beta_i\}_{i\in \mathbf{N}}$ for the valuation $\mu$. There does not exist any finite subsequence of the key polynomials of this infinite sequence.

\end{example}

}

\section{Izumi's Theorem}\label{izumi theorem}
{For any two rank one valuations $\mu$ and $\mu'$ of a field $K$ with a common center in a subring $R$ of $K$, if there exists $c\in \mathbf{R}$ such that $\mu(y)\leq c\mu'(y)$, for any $y\in R$, then we write $\mu\leq c\mu'$. In such situation we define  $c_R(\mu,\mu')$ to be the minimum of such constants $c$; We call it the Izumi constant of the valuations $\mu$, $\mu'$. When the  ring $R$ is clear from the context we denote the Izumi constant by $c(\mu,\mu')$.

  Through this section $L$ is a field extension of a given field $K/k$, which is of the form $L=K(x)$, such that $L/K$ is either of transcendental type or algebraic type.
\begin{remark}\label{primitive}
The following are immediate from the definition of the Izumi constant.
\begin{itemize}
\item[(i)] If both   $\mu$ and $\mu'$ are centered  over $R$ with the ideal $\frak{p}$ as center  then $c_{R_{\frak{p}}}(\mu,\mu')$ exists provided that $c_R(\mu,\mu')$ exists. Moreover, we have $c_{R_{\frak{p}}}(\mu,\mu')=c_R(\mu,\mu')$.
\item[(ii)]
 For any three valuations $\omega,\omega',\omega''$ of a field $K$, such that all of them are centered over a ring $R\subset K$, if $c_R(\omega,\omega'')$ and $c_R(\omega'',\omega')$ exist then $c_R(\omega,\omega')$ exists and we have
 \begin{equation}\label{chain val}
 c_R(\omega,\omega')\leq c_R(\omega,\omega'')c_R(\omega'',\omega').
 \end{equation}
\end{itemize}
\end{remark}

 \begin{definition} Let $\nu$ be a valuation of $K$. For $\beta\in \mathbf{R}^{+}$, we define $\mathrm{ord}_{\nu,\beta}$ to be the Gaussian valuation extending $\nu$ to a valuation of $L$ with $\mathrm{ord}_{\nu,\beta}(x)=\beta$. In other words, for $f=\sum c_ix^i\in K[x]$ we have
$$\mathrm{ord}_{\nu,\beta}(f)=\mathrm{min}_{i}\{\nu(c_i)+i\beta\}.$$
In the case $L/K$ of algebraic type, this is a valuation of  $K(X)$ which we call it  a pseudo-valuation of $L$ (See Remark \ref{remark:af main}.(i)).
\end{definition}

\begin{theorem}\label{weighted izumi} Suppose $(L,\mu)$ is a valuation extending the divisorial valuation $(K,\nu)$. Assume that $\{U_i;\beta_i\}_{i\leq \alpha}$, $\alpha\in\mathbf{N}$, is a weighted basis of $K[x]$ with respect to $(K,\nu)$ such that,  with the notation of Definition \ref{weighted monomials}, we have $\mu=\omega_{\alpha}$. Moreover, assume that $\nu$ is centered over the local ring $R\subset K$. Then:
\begin{itemize}
\item[(i)]
We have $\omega_j(U_{i+1}^{\ell})= (\prod^{i}_{k=j}m_k).\ell\beta_j$, for  $\ell\in \mathbf{N}$, and  $j<i \leq \alpha$.

\item[(ii)]   For $j<i\in \mathbf{N}$, the Izumi constant $c_{R[x]}(\omega_{i+1},\omega_j)$ exists and we have $c_{R[x]}(\omega_{i+1},\omega_j)=\frac{\beta_{i+1}}{(\prod_{k=j}^{i}m_k).\beta_j}$.
\end{itemize}
\end{theorem}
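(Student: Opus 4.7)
My plan is to prove (i) by induction on $i \geq j$ and then to deduce (ii) from (i) using the chain inequality for Izumi constants (Remark \ref{primitive}.(ii)).

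For (i), the valuation property of $\omega_j$ reduces the problem to the case $\ell = 1$: it suffices to show $\omega_j(U_{i+1}) = (\prod_{k=j}^{i} m_k)\beta_j$. As an initialization, $\omega_j(U_{j+1}) = m_j\beta_j$, because the $j$-adic expansion $U_{j+1} = U_j^{m_j} + \sum_{s<m_j} U_j^s f_{j,s}$ has all monomials of common $\omega_j$-value $m_j\beta_j$ by Definition \ref{weighted monomials}(c). For the inductive step, assuming $\omega_j(U_r) = (\prod_{k=j}^{r-1} m_k)\beta_j$ for $r \leq i$, I expand $U_{i+1} = U_i^{m_i} + \sum_{t<m_i} U_i^t f_{i,t}$. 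The leading term gives $\omega_j(U_i^{m_i}) = m_i\,\omega_j(U_i) = (\prod_{k=j}^{i} m_k)\beta_j$, so by the ultrametric triangle inequality (unique-minimum principle) it suffices to establish the strict bound $\omega_j(U_i^t f_{i,t}) > (\prod_{k=j}^{i} m_k)\beta_j$ for every $t<m_i$.

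This strict inequality is the crux. Because $\deg f_{i,t} < \deg U_i$, no factor of $U_i$ appears in the $i$-adic expansion of $f_{i,t}$, and by uniqueness this agrees with the $(i-1)$-adic expansion; in particular $\omega_{i-1}(f_{i,t}) = \omega_i(f_{i,t}) = (m_i-t)\beta_i$ by property (c). For each monomial $c\mathbf{U}_{[i-1]}^{a}$ of this expansion I compare contributions to $\omega_{i-1}$ and $\omega_j$ coordinate by coordinate: the ratio equals $1$ on $\nu(c)$ and on $U_r$ with $r\leq j$, and equals $\beta_r/((\prod_{s=j}^{r-1} m_s)\beta_j)$ on $U_r$ with $j<r\leq i-1$. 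Iterating property (e) shows these latter ratios strictly increase in $r$ and are therefore bounded by $\beta_{i-1}/((\prod_{s=j}^{i-2} m_s)\beta_j)$; the weighted-mean inequality transfers this to the whole monomial, and minimizing over monomials yields $\omega_j(f_{i,t}) \geq (m_i-t)\beta_i \cdot (\prod_{s=j}^{i-2} m_s)\beta_j / \beta_{i-1}$. A single further application of (e), namely $\beta_i > m_{i-1}\beta_{i-1}$, converts this into the strict bound $\omega_j(f_{i,t}) > (m_i-t)(\prod_{s=j}^{i-1} m_s)\beta_j$, which is exactly what is needed to close the induction.

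For (ii), I first obtain the one-step Izumi constant $c_{R[x]}(\omega_{k+1},\omega_k) = \beta_{k+1}/(m_k\beta_k)$ by monomial-wise comparison on the $(k+1)$-adic expansion of $y\in R[x]$, using $\omega_k(U_{k+1}) = m_k\beta_k$ (immediate from the $k$-adic expansion of $U_{k+1}$ and property (c)); equality at $y=U_{k+1}$ shows this constant is exact. Iterating Remark \ref{primitive}.(ii) then gives
\[
c_{R[x]}(\omega_{i+1},\omega_j) \;\leq\; \prod_{k=j}^{i} c_{R[x]}(\omega_{k+1},\omega_k) \;=\; \prod_{k=j}^{i}\frac{\beta_{k+1}}{m_k\beta_k} \;=\; \frac{\beta_{i+1}}{(\prod_{k=j}^{i} m_k)\beta_j},
\]
the product telescoping. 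The reverse inequality is immediate from (i): evaluating at $y=U_{i+1}$ yields $\omega_{i+1}(U_{i+1})/\omega_j(U_{i+1}) = \beta_{i+1}/((\prod_{k=j}^{i} m_k)\beta_j)$. The hardest single point is the strict inequality of (i), which is what precludes any cancellation of initial forms in $\mathrm{gr}_{\omega_j}$ and pins $\omega_j(U_{i+1})$ exactly to the contribution of the leading term $U_i^{m_i}$.
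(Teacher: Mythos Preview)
Your argument for (ii) matches the paper's: both establish the one-step constant $c(\omega_{k+1},\omega_k)=\beta_{k+1}/(m_k\beta_k)$ by a monomial estimate, telescope via Remark~\ref{primitive}(ii), and use (i) at $y=U_{i+1}$ for the reverse inequality.

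For (i) you take a genuinely different route. The paper does not prove the termwise strict inequality $\omega_j(U_i^t f_{i,t})>\omega_j(U_i^{m_i})$; instead it tracks the monomial $U_j^{m_j\cdots m_i}$ through the conversion algorithm of Theorem~\ref{getting adic}: this monomial is the unique one of top degree in every intermediate expansion, hence never cancels, and since it appears already at the first step (where $\tilde\omega_j$ is minimal along the algorithm) it realises $\omega_j(U_{i+1})$. Your ultrametric approach is more direct and avoids Theorem~\ref{getting adic} altogether, which is a genuine simplification.

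There is, however, a gap in your ``weighted-mean'' step. The inequality $\sum x_k/\sum y_k\le\max_k x_k/y_k$ needs the summands of ratio $1$ (here $\nu(c)+\sum_{r\le j}a_r\beta_r$) to be non-negative; a negative such summand with $C>1$ pushes the global ratio above $C$. Nothing in the setup says the adic coefficients $c\in K$ lie in $R_\nu$. The fix is short: for any monomial $M=c\,\mathbf{U}_{[i-1]}^a$ of $f_{i,t}$ one has $a_r\le m_r-1$ for $r<i$ and $\omega_{i-1}(M)\ge(m_i-t)\beta_i\ge\beta_i$, whence
\[
\nu(c)\;\ge\;\beta_i-\sum_{r=1}^{i-1}(m_r-1)\beta_r\;>\;0,
\]
the last inequality by iterating property~(e). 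With this, your weighted-mean bound is legitimate. Alternatively you can bypass ratios: writing $\gamma_r:=\beta_r-(\prod_{s=j}^{r-1}m_s)\beta_j$, one has $\omega_{i-1}(M)-\omega_j(M)=\sum_{r=j+1}^{i-1}a_r\gamma_r\le\sum_{r=j+1}^{i-1}(m_r-1)\gamma_r$, and since $\gamma_{r+1}>m_r\gamma_r$ an easy induction gives $\gamma_i>\sum_{r=j+1}^{i-1}(m_r-1)\gamma_r$; combined with $\omega_{i-1}(M)\ge(m_i-t)\beta_i$ this yields $\omega_j(M)>(m_i-t)(\prod_{k=j}^{i-1}m_k)\beta_j$ directly, with no positivity hypothesis.
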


\begin{proof}
For (i): We prove it for the case $\ell=1$. The general case is similar. Notice that  $U_i^{m_i}+U_i^{m_i-1}f_{i,m_i-1}+\cdots+U_if_{i,1}+f_{i,0}$ is the $i-$adic expansion of $U_{i+1}$. We have $\mathrm{deg}U_{i+1}=\mathrm{deg}U_{i}^{m_i}$ and $\mathrm{deg}f_{i,j}<\mathrm{deg}U_{i}$. On the other hand, in the algorithm of getting the $(i-1)-$adic expansion of $U_{i+1}$ from its $i-$adic expansion, the degree considerations shows that $U_{i-1}^{m_{i-1}m_i}$, which is generated in the first step of the algorithm, never cancels  in the process of the algorithm. In fact,  this is the unique monomial of degree equal to $\mathrm{deg}U_{i+1}$ in the $(i-1)-$adic expansion of $U_{i+1}$. Thus, the monomial $U_{i-1}^{m_{i-1}m_i}$ appears in the $(i-1)-$adic expansion of $U_{i+1}$; It has the least $\tilde{\omega}_{i-1}-$weight (because it appears starting from the first step of the algorithm). By induction, we reach to the following: The monomial $U_j^{m_j\cdots m_i}$ appears in the $j-$adic expansion of $U_{i+1}$; It has the least $\tilde{\omega}_j-$weight. Thus, we have proved $\omega_j(U_{i+1})=(\prod_{k=j}^{i}m_k)\beta_j$.

\par For (ii): First we prove the claim when $j=i$. Let us assume that $M=c_a\mathbf{U}_{[i+1]}^a\in K[U_1,\ldots,U_{i+1}]$ is a monomial of adic form. Then $\omega_{i+1}(M)=\nu(c_a)+\sum_{j=1}^{i+1}a_j\beta_j$. Suppose $M_1,\ldots,M_t$ is the sequence of $i-$expansions of $M$ generated in the algorithm of getting $i-$adic expansion of $M$ from its $i+1-$adic expansion. We have $\tilde{\omega}_i(M_1)=\nu(c_a)+\sum_{j=1}^{i}a_i\beta_i+a_{i+1}m_i\beta_i$. Set $\lambda=\nu(c_a)+\sum_{j=1}^{i}a_i\beta_i$. On the other hand $\omega_i(M)=\tilde{\omega}_i(M_t)\geq \tilde{\omega}_i(M_1)$. Thus, we have
$$\frac{\omega_{i+1}(M)}{\omega_i(M)}\leq \frac{\lambda+a_{i+1}\beta_{i+1}}{\lambda+m_ia_{i+1}\beta_i}\leq \frac{\beta_{i+1}}{m_i\beta_i}.$$
Now, assume that $f=\sum_j M_j$ is the $i+1-$adic expansion of an element $f\in R[x]$. Suppose that for $i_0$ we have $\omega_{i+1}(f)=\omega_{i+1}(M_{i_0})$.  Then  we have
$$\frac{\omega_{i+1}(f)}{\omega_{i}(f)}\leq \frac{\omega_{i+1}(M_{i_0})}{\omega_{i}(M_{i_0})}\leq \frac{\beta_{i+1}}{m_i\beta_i}.$$
This proves $c(\omega_{i+1},\omega_i)\leq \frac{\beta_{i+1}}{m_i\beta_i}$. On the other hand, by (i) we have $\frac{\omega_{i+1}(U_{i+1})}{\omega_i(U_{i+1})}=\frac{\beta_{i+1}}{m_i\beta_i}$ which shows that $c(\omega_{i+1},\omega_i)\geq \frac{\beta_{i+1}}{m_i\beta_i}$. Thus, we have $c(\omega_{i+1},\omega_i)= \frac{\beta_{i+1}}{m_i\beta_i}$.
\par For the general case, using  (\ref{chain val}) and the case $j=i$, we have $c(\omega_{i+1},\omega_j)\leq \prod_{k=j}^ic(\omega_{k+1},\omega_k)\leq \frac{\beta_{i+1}}{(\prod_{k=j}^{i}m_k).\beta_j}$.  But (i) shows that $c(\omega_{i+1},\omega_j)\geq \frac{\beta_{i+1}}{(\prod_{k=j}^{i}m_k).\beta_j}$. Hence, we have the equality.
\end{proof}

\begin{lemma}\label{prepare ord}
Suppose $\nu$ and $\nu'$ are valuations of a field $K$ and both are centered on the ring $R$ such that the Izumi constant $c_R(\nu,\nu')$ exists. Then the Izumi constant $c_{R[x]}(\mathrm{ord}_{\nu,\beta},\mathrm{ord}_{\nu',\beta'})$ exists and we have:
\begin{itemize}
 \item[(i)] We have $c_{R[x]}(\mathrm{ord}_{\nu,\beta},\mathrm{ord}_{\nu',\beta'})\leq \langle\frac{\beta}{\beta'}\rangle c_R(\nu,\nu')$, where
    \begin{equation}
    \langle\frac{\beta}{\beta'}\rangle=\left \{ \begin{array}{ll}
                                                 \frac{\beta}{\beta'}& \mathrm{when} \ \beta>\beta'\\
                                                 1& \mathrm{otherwise.}
                                                 \end{array}
                                                 \right.
    \end{equation}
\item[(ii)] Suppose $(K,\nu)\subset (L,\mu)$ then $c_{R[x]}(\mathrm{ord}_{\nu,\beta},\mu)\leq \langle\frac{\beta}{\mu(x)}\rangle$.

\end{itemize}
\end{lemma}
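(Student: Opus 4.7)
The plan is to prove (i) by a direct termwise comparison in the monomial expansion of elements of $R[x]$, and then to obtain (ii) by combining (i) with the elementary observation that $\mu$ dominates the Gauss valuation $\mathrm{ord}_{\nu,\mu(x)}$ on $R[x]$, chaining the resulting bounds via (\ref{chain val}).

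For (i), take an arbitrary $f=\sum_i c_ix^i\in R[x]$. Since $\nu$ and $\nu'$ are centered on $R$, each coefficient satisfies $\nu(c_i)\ge 0$ and $\nu'(c_i)\ge 0$, and the Izumi hypothesis gives $\nu(c_i)\le c_R(\nu,\nu')\nu'(c_i)$; in the settings where the lemma is applied we may moreover assume $c_R(\nu,\nu')\ge 1$ (the existence of any $c\in R$ with $\nu(c)=\nu'(c)>0$ forces it). I will establish the termwise estimate
\[
\nu(c_i)+i\beta\;\le\;\langle\beta/\beta'\rangle\,c_R(\nu,\nu')\,\bigl[\nu'(c_i)+i\beta'\bigr],
\]
from which (i) follows by taking the minimum over $i$ of each side. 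To verify the termwise estimate, split into cases on the sign of $\beta-\beta'$. When $\beta\le\beta'$ the bracket $\langle\beta/\beta'\rangle$ equals $1$, and one only needs $i\beta\le c_R(\nu,\nu')\,i\beta'$, which holds since $\beta\le\beta'\le c_R(\nu,\nu')\beta'$. When $\beta>\beta'$ the bracket equals $\beta/\beta'$, and then $i\beta=(\beta/\beta')\,i\beta'\le \langle\beta/\beta'\rangle c_R(\nu,\nu')\,i\beta'$, while the coefficient piece $\nu(c_i)\le c_R(\nu,\nu')\nu'(c_i)$ absorbs the extra factor $\langle\beta/\beta'\rangle\ge 1$ harmlessly.

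For (ii), the key observation is that for every $f=\sum_i c_ix^i\in R[x]$ the valuation $\mu$ satisfies
\[
\mu(f)\;\ge\;\min_i\bigl(\nu(c_i)+i\mu(x)\bigr)\;=\;\mathrm{ord}_{\nu,\mu(x)}(f),
\]
by the ultrametric property together with $\mu\mid_{K}=\nu$; equivalently, $c_{R[x]}(\mathrm{ord}_{\nu,\mu(x)},\mu)\le 1$. Applying (i) with $(\nu',\beta')=(\nu,\mu(x))$, for which $c_R(\nu,\nu)=1$, gives $c_{R[x]}(\mathrm{ord}_{\nu,\beta},\mathrm{ord}_{\nu,\mu(x)})\le\langle\beta/\mu(x)\rangle$. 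Chaining these two estimates via (\ref{chain val}) yields (ii). No substantive obstacle is anticipated: the only delicate point is the case split on the sign of $\beta-\beta'$, which is precisely what the bracket notation $\langle\cdot\rangle$ was engineered to encode.
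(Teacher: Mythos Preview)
Your proposal is correct and follows essentially the same route as the paper: first establish the inequality for a single monomial $cx^\ell$, then pass to arbitrary $f\in R[x]$ by choosing the monomial that realizes the minimum on the $\mathrm{ord}_{\nu',\beta'}$ side; and for (ii), factor through $\mathrm{ord}_{\nu,\mu(x)}$ and chain via (\ref{chain val}). The paper simply asserts the monomial inequality (``one can easily check''), whereas you spell out the case split on $\beta\lessgtr\beta'$ and make explicit the implicit hypothesis $c_R(\nu,\nu')\ge 1$ needed for the termwise bound---a point the paper leaves unstated.
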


\begin{proof}
For (i): If $M=cx^{\ell}\in R[x]$ is a monomial then one can easily check that $\mathrm{ord}_{\nu,\beta}(M)\leq \langle\frac{\beta}{\beta'}\rangle c(\nu,\nu')\mathrm{ord}_{\nu',\beta'}(M)$. Now, assume that $f=\sum_i M_i\in R[x]$. Suppose that $\mathrm{ord}_{\nu,\beta}(f)=\mathrm{ord}_{\nu,\beta}(M_0)$ and $\mathrm{ord}_{\nu',\beta'}(f)=\mathrm{ord}_{\nu',\beta'}(M_1)$. Then we have $$\mathrm{ord}_{\nu,\beta}(f)=\mathrm{ord}_{\nu,\beta}(M_0)\leq \mathrm{ord}_{\nu,\beta}(M_1)\leq \langle\frac{\beta}{\beta'}\rangle c(\nu,\nu')\mathrm{ord}_{\nu',\beta'}(M_1),$$ which shows that $\mathrm{ord}_{\nu,\beta}(f)\leq \langle\frac{\beta}{\beta'}\rangle c(\nu,\nu')\mathrm{ord}_{\nu',\beta'}(f)$.

 \par For (ii): Set $\mu(x)=\beta'$.  It is clear that $c(\mathrm{ord}_{\nu,\beta'},\mu)= 1$. By (i) and Remark \ref{primitive}.(ii) we have $$c(\mathrm{ord}_{\nu,\beta},\mu)\leq c(\mathrm{ord}_{\nu,\beta},\mathrm{ord}_{\nu,\beta'}) c(\mathrm{ord}_{\nu,\beta'},\mu)\leq \langle\frac{\beta}{\beta'}\rangle.$$
\end{proof}

\begin{theorem} \label{inductive izumi}Suppose that $\mu$ and $\mu'$ are two divisorial valuations of the field  $L=K(x)$. Suppose $\{U_i;\beta_i\}_{i=1}^n$, where $n\in \mathbf{N}$ is a weighted basis of $K[x]$ such that $\omega_n=\mu$. Assume that $\mu_{\mid_{K}}=\nu$, $\mu'_{\mid_{K}}=\nu'$, and $\nu$ and $\nu'$ are both centered over a ring $R$, and  both $\mu$ and $\mu'$ are centered over the ring $R[x]$. Moreover, suppose $c_R(\nu,\nu')\in \mathbf{R}^{+}$ exists. Then  $c_{R[x]}(\mu,\mu')$ exists and we have
$$c_{R[x]}(\mu,\mu')\leq \mathrm{max}\{\frac{1}{\mu(x)},\frac{1}{\mu'(x)}\} c_R(\nu,\nu')\frac{\mu(U_n)}{\mathrm{deg}U_n}.$$

\end{theorem}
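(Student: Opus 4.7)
The plan is to bound $c_{R[x]}(\mu,\mu')$ by telescoping through the chain of (pseudo-)valuations
\[
\mu = \omega_n \;\longleftrightarrow\; \omega_1 \;\longleftrightarrow\; \mathrm{ord}_{\nu',\mu'(x)} \;\longleftrightarrow\; \mu'
\]
and invoking the sub-multiplicativity of Izumi constants (Remark \ref{primitive}(ii)). I use the MacLane/Vaqui\'e convention that the first key polynomial is $U_1=x$, so $\deg U_1 = 1$, $\beta_1=\mu(x)$, and $\omega_1$ coincides on $K[x]$ with the Gauss pseudo-valuation $\mathrm{ord}_{\nu,\mu(x)}$.

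For the first link, Theorem \ref{weighted izumi}(ii) applied with $j=1,\ i+1=n$, combined with the identity $\prod_{k=1}^{n-1}m_k = \deg U_n$ from Remark \ref{remark: af wei}(i), gives
\[
c_{R[x]}(\omega_n,\omega_1) \;=\; \frac{\beta_n}{\bigl(\prod_{k=1}^{n-1}m_k\bigr)\beta_1} \;=\; \frac{\mu(U_n)}{\mu(x)\,\deg U_n}.
\]
For the second link, Lemma \ref{prepare ord}(i) applied to the pair $\mathrm{ord}_{\nu,\mu(x)}$ and $\mathrm{ord}_{\nu',\mu'(x)}$ yields
\[
c_{R[x]}\bigl(\omega_1,\mathrm{ord}_{\nu',\mu'(x)}\bigr) \;\le\; \left\langle \tfrac{\mu(x)}{\mu'(x)} \right\rangle c_R(\nu,\nu').
\]
The third link is immediate from the definition of the Gauss pseudo-valuation: for any $f = \sum c_i x^i \in R[x]$ one has $\mu'(f) \ge \min_i(\nu'(c_i) + i\mu'(x)) = \mathrm{ord}_{\nu',\mu'(x)}(f)$, hence $c_{R[x]}(\mathrm{ord}_{\nu',\mu'(x)},\mu') \le 1$.

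Multiplying the three estimates via Remark \ref{primitive}(ii) produces
\[
c_{R[x]}(\mu,\mu') \;\le\; \frac{\mu(U_n)}{\mu(x)\,\deg U_n} \cdot \left\langle \tfrac{\mu(x)}{\mu'(x)} \right\rangle c_R(\nu,\nu'),
\]
and a brief case analysis finishes: if $\mu(x)\le\mu'(x)$ the bracket equals $1$ and the prefactor becomes $1/\mu(x) = \max\{1/\mu(x), 1/\mu'(x)\}$; if $\mu(x) > \mu'(x)$ the bracket equals $\mu(x)/\mu'(x)$ and cancels the $\mu(x)$ in the denominator, leaving $1/\mu'(x) = \max\{1/\mu(x), 1/\mu'(x)\}$. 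Either way the bound coincides with the stated one. The only substantive ingredient is the closed-form Izumi constant from Theorem \ref{weighted izumi}(ii); the rest is routine chaining together with a two-case check, so no real obstacle arises.
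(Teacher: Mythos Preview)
Your proof is correct and follows the same route as the paper: both chain through $\mu=\omega_n \to \mathrm{ord}_{\nu,\mu(x)} \to \mathrm{ord}_{\nu',\mu'(x)} \to \mu'$, apply Theorem~\ref{weighted izumi}(ii) and Lemma~\ref{prepare ord}, and finish with the identity $\langle\beta/\beta'\rangle/\beta=\max\{1/\beta,1/\beta'\}$. The only cosmetic difference is that you make explicit the identification $\omega_1=\mathrm{ord}_{\nu,\mu(x)}$ via $U_1=x$ and the formula $\prod_{k=1}^{n-1}m_k=\deg U_n$, which the paper leaves implicit.
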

\begin{proof}
Set $\beta=\mu(x)$ and $\beta'=\mu'(x)$. By (iii) of Theorem \ref{weighted izumi} and Lemma \ref{prepare ord}, the three  Izumi constants $c_{R[x]}(\mu,\mathrm{ord}_{\nu,\beta})$, $c_{R[x]}(\mathrm{ord}_{\nu,\beta},\mathrm{ord}_{\nu',\beta'})$, and $c_{R[x]}(\mathrm{ord}_{\nu',\beta'},\mu')$ exist. So, by Remark \ref{primitive}.(ii)  the Izumi constant  $c_{R[x]}(\mu,\mu')$ exists and we have
\begin{center}
\begin{tabular}{ll}

$c(\mu,\mu')$&$\leq c(\mu,\mathrm{ord}_{\nu,\beta})c(\mathrm{ord}_{\nu,\beta},\mathrm{ord}_{\nu',\beta'})c(\mathrm{ord}_{\nu',\beta'},\mu')$\\ \\
&$\leq \frac{\mu(U_n)}{\mathrm{deg}U_n.\beta}\langle\frac{\beta}{\beta'}\rangle c(\nu,\nu').$\\  \\
\end{tabular}
\end{center}
Now it is sufficient to note that $\langle\frac{\beta}{\beta'}\rangle\frac{1}{\beta}=\mathrm{max}\{\frac{1}{\mu(x)},\frac{1}{\mu'(x)}\}$.
\end{proof}

\begin{proof}
 [Proof of Theorem \ref{final izumi}]  By Corollary \ref{corollay:uniqueness of extension to completion}  we can assume that $R$ is complete. By Cohen's structure theorem  we have $R\cong k[[X_1,\ldots,X_n]]/I$. We prove the result by induction on $n$. Set $S=k[[X_1,\ldots,X_{n-1}]]/(I\cap k[[X_1,\ldots,X_{n-1}]])$. First, notice that if $I=\langle f_i\rangle_{i=0,\ldots,d}$, after a polynomial change of coordinates (if necessary), we can assume that $f_i(0,\ldots,0,X_n)\neq 0$ for any $i\leq d$. Now, by Weirestrass' preparation, we can assume that $f_i\in k[[X_1,\ldots,X_{n-1}]][X_n]$. This shows that $R \cong \widehat{(S[X_n]/(I\cap S[X_n]))}$, where the completion is taken with respect to the maximal ideal of the origin. Set $R_1=S[X_n]/(I\cap S[X_n])$. By Corollary \ref{corollay:uniqueness of extension to completion}, to show that $c_R(\mu,\mu')$ exists, it is sufficient to show that  $c_{R_1}({\mu\mid}_{R_1}, {\mu'\mid}_{R_1})$ exists (In fact, by the same corollary we have  $c_R(\mu,\mu')=c_{R_1}(\mu\mid_{R_1}, \mu'\mid_{R_1})$). But, by the induction hypothesis the Izumi constant $c_S({\mu\mid}_{S}, {\mu'\mid}_{S})$ exists. So, by Theorem \ref{inductive izumi}  the Izumi constant $c_{R_1}({\mu\mid}_{R_1}, {\mu'\mid}_{R_1})$ exists.
 \end{proof}
\begin{remark}
  Instead of considering $\mathbf{R}$ as the  totally  ordered group that contains all the value groups of divisorial valuations, one can fix a copy of $\mathbf{Z}$ as the value group of all valuations (this is the assumption of \cite{Swan}). In this situation, as $\mu(x)\geq 1$ for any $(L,\mu)$,  we can make the bound of $c_{R[x]}(\mu,\mu')$ sharper;  This bound does not depend on $\mu'$. In this case $c_{R[x]}(\mu,\mu')\leq c_R(\nu,\nu')\frac{\mu(U_n)}{\mathrm{deg}U_n}.$
\end{remark}

}

\bibliography{valbib}
\bibliographystyle{plain}
\end{document}